\numberwithin{equation}{section}
\theoremstyle{plain}
\newtheorem*{algorithm}{Pseudo--Code}
\newtheorem*{theorem*}{Theorem}
\newtheorem{theorem}{Theorem}
\numberwithin{theorem}{section}
\newtheorem{proposition}[theorem]{Proposition}
\newtheorem{lemma}[theorem]{Lemma}
\newtheorem{corollary}[theorem]{Corollary}
\theoremstyle{definition}
\newtheorem{definition}[theorem]{Definition}
\newtheorem{example}[theorem]{Example}
\DeclareMathOperator{\im}{im}
\DeclareMathOperator{\interior}{int}
\DeclareMathOperator{\rank}{rank}
\DeclareMathOperator{\supp}{supp}
\title{Completions to discrete probability distributions in log-linear models}
\author[M.~Cai]{May Cai}
\address{M.~Cai\\
            School of Mathematics\\
            Georgia Institute of Technology\\
            Atlanta\\
            Georgia \ 30332\\
            USA}
\email{mcai@gatech.edu}
\author[C.O. ~Recke]{Cecilie Olesen Recke}
\address{C.O.~Recke\\
         Department of Mathematics\\
         University of Copenhagen\\
         Copenhagen \\ 
         Denmark}
\email{cor@math.ku.dk}
\author[T.~Yahl]{Thomas Yahl}
\address{T.~Yahl\\
        Department of Mathematics\\
         University of Wisconsin-Madison\\
         Madison\\
         Wisconsin \ 53706\\
         USA}
\email{tyahl@wisc.edu}
\urladdr{https://tjyahl.github.io/}
\date{}
\begin{document}

\maketitle

\begin{abstract}
Completion problems, of recovering a point from a set of observed coordinates, are abundant in applications to image reconstruction, phylogenetics, and data science. We consider a completion problem coming from algebraic statistics: to describe the completions of a point to a probability distribution lying in a given log-linear model. When there are finitely many completions, we show that these points either have a unique completion or two completions to the log-linear model depending on the set of observed coordinates. We additionally describe the region of points which have a completion to the log-linear model.

\vspace{\baselineskip}

\noindent\textbf{Keywords:} Log-linear model, toric variety, completion, algebraic moment map, semialgebraic set, algebraic boundary

\noindent\textbf{MSC2020 subject classification:} Primary: 62R01, Secondary: 14M25, 14P10, 
\end{abstract}

\section{Introduction}
We consider the problem of recovering a probability distribution from partial information. This may occur as an imperfect sampling method may prevent one from observing or distinguishing certain outcomes and thus, it may be that probabilities are only known for certain outcomes. With a priori knowledge that the probability distribution belongs to a specified statistical model, the known probabilities may be used to compute the probability of each outcome. In this case, we say the original probability distribution may be recovered, or completed.

A problem of this form is known as a \textit{probability completion problem}. We describe the general setting more precisely. We consider discrete probability distributions with outcome states $[n] = \{1,\dotsc,n\}$ for a fixed positive integer $n$. Such a probability distribution may be represented by a tuple $(p_1,\dotsc,p_n)\in\mathbb{R}^n$, whose $i$-th coordinate $p_i$ is the probability of outcome $i\in[n]$. The probability simplex $\Delta_{n-1}\subseteq\mathbb{R}^n$ is the set of these discrete probability distributions and a \textit{statistical model} $\mathcal{M}\subseteq\Delta_{n-1}$ is a subset of the probability simplex. A fixed subset of the states $E\subseteq[n]$ will index the probabilities which are to be known or observed, and we consider the coordinate projection $\pi_E:\mathcal{M}\to\mathbb{R}^E$ to the coordinates indexed by $E$. A point $p_E\in\mathbb{R}^E$ is called a \textit{partial observation} and a probability distribution in the fiber $p\in\pi_E^{-1}(p_E)$ is a \textit{completion} of the partial observation $p_E$ to the model $\mathcal{M}$. One looks to describe the fiber $\pi_E^{-1}(p_E)$, which is the set of completions of the partial observation $p_E$ to $\mathcal{M}$. In addition to enumerating the completions of a partial observation, one may look to explicitly describe the \textit{completable region} $\pi_E(\mathcal{M})$---the set of partial observations which may be completed to $\mathcal{M}$.

Probability completion problems were first considered in \cite{kahle2017geometry,kubjas2014matrix}, where they study completions to the independence model of two or more random variables. As the independence model is the intersection of the space of rank one tensors and the probability simplex, this may be regarded as a type of low rank tensor completion problem as in \cite{singer2010uniqueness,kiraly2012algebraic,kiraly2015algebraic,bernstein2020typical}. More generally, probability completion problems may be understood as problems in compressed sensing as described in \cite{breiding2023algebraic}.

We demonstrate an example of a probability completion problem: the Hardy-Weinberg curve used in genetics is a statistical model whose probability distributions represent the probabilities of passing on certain traits from parents to their offspring. In the case of a trait with a dominant gene $X$ and a recessive gene $Y$, there are three genotypes that can be passed on---these are the homozygous combinations $XX$ and $YY$, and the heterozygous combination $XY$. Using the variables $x$ and $y$ for the probabilities of passing on the homozygous combinations $XX$ and $YY$ respectively, and ussingular locus of maping $z$ for the probability of passing on the heterozygous combination $XY$, the Hardy-Weinberg model $\mathcal{M}$ of possible probability distributions is defined by the equations $z^2-4xy = 0$ and $x+y+z=1$, where all coordinates are non-negative. This curve is depicted in Figure \ref{fig:HardyWeinberg}.

\begin{figure}[H]
    \centering
    \includegraphics[scale=.4]{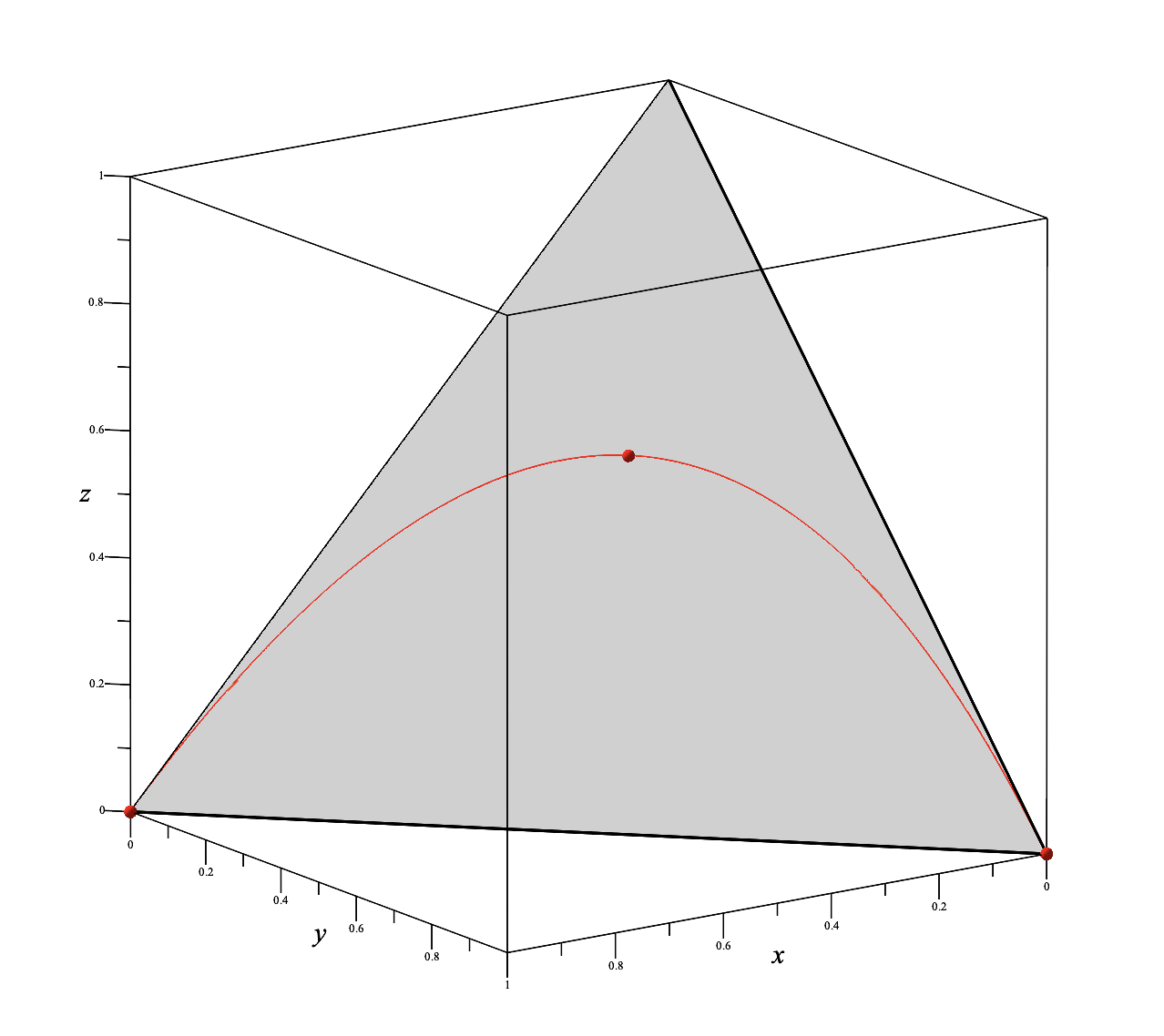}
    \caption{The Hardy-Weinberg curve}
    \label{fig:HardyWeinberg}
\end{figure}

The projections onto either the first coordinate or the second coordinate are injective and the image in both cases is the interval $[0,1]$. That is, a point on the Hardy-Weinberg curve is uniquely completable from its first coordinate or its second coordinate. Equivalently, if the probability of either homozygous combination $XX$ or $YY$ being passed on is known, all probabilities can be determined. However, the projection onto the third coordinate is a $2$-to-$1$ mapping for $0\le z<1/2$ and the image of the projection is $[0,1/2]$. Thus, from a known $z$ coordinate less than $1/2$, there are two completions to a point of the Hardy-Weinberg curve. Equivalently, given that the probability of the heterozygous combination $XY$ being passed on is known and less than $1/2$ there are two possible probabilities for the homozygous combinations $XX$ and $YY$ to be passed on.

We are concerned with probability completion problems where the statistical model $\mathcal{M}$ is a log-linear model, the restriction of a toric variety to the probability simplex. The class of log-linear models encompasses many well-studied discrete models such as discrete graphical models, hierarchical models, and staged-tree models \cite{serkan2002grobner,serkan2007finiteness,aida2022staged}. Log-linear models are also useful as their Markov bases are understood and may be used in sampling algorithms as described in \cite{sturmfels1998algorithms,sullivant2018algebraic}.

Our main contributions are in showing that the behaviour demonstrated for the Hardy-Weinberg curve is typical. Precisely, we prove that for a log-linear model $\mathcal{M}$ and suitable set of observed outcomes $E\subseteq[n]$, there is either a unique completion or there are two completions for every point with non-zero coordinates in the completable region. Further, we use the relationship between toric varieties and polyhedral geometry to identify precisely when there is one completion and when there are two completions. These results are given in Theorem \ref{thm:boundary-completions} and Theorem \ref{thm:interior-completions}.

In addition, we give a description of the boundary and interior of the completable region in Theorem \ref{thm:boundary-interior-image}. We use this result to provide an algorithm for computing the defining equations for the boundary of the completable region. These defining equations often allow one to compute an explicit semialgebraic description of the completable region, as discussed in Section 5. We illustrate these results with several examples, some of which are relevant for applications.

We begin by giving background on toric varieties and log-linear models in Section~\ref{sec:log-linear models}. In Section~\ref{sec:toricresults}, we describe the completion problem to a given toric variety and include necessary results for future sections. We present our completion results to a given log-linear model in Section~\ref{sec:loglinearresults}. Last, in Section~\ref{sec:computing-completable-region}, we give a procedure for describing the algebraic boundary of the completable region, as well as provide explicit semialgebraic descriptions for certain models.

\subsection*{Acknowledgements} The authors would like to thank Kaie Kubjas for her mentorship and advice throughout the research process. We are also grateful for Serkan Ho{\c s}ten, Kaie, and Bernd Sturmfels for organizing the Varieties from Statistics Apprenticeship Week at the Institute for Mathematical and Statistical Innovation (IMSI). Furthermore we would like to thank all the other participants of the Algebraic Statistics and Our Changing World program for their feedback on early drafts of the paper. Part of this research was performed while the authors were visiting the Institute for Mathematical and Statistical Innovation (IMSI), which is supported by the National Science Foundation (Grant No.\ DMS-1929348). This research was partially supported by NSF Grant No.\ DMS-1855726. Cecilie Olesen Recke was supported by Novo Nordisk Foundation Grant NNF20OC0062897.

\section{Toric Varieties}\label{sec:log-linear models}
We discuss the necessary background on toric varieties for the completion problem. Given a vector of non-negative integers $v = (v_1,\dotsc,v_k)\in\mathbb{Z}_{\ge 0}^k$ and variables $\theta = (\theta_1,\dotsc,\theta_k)$, a \textit{monomial} is an expression of the form $\theta^v = \theta_1^{v_1}\dotsb\theta_k^{v_k}$. The vector $v$ is the exponent vector of the monomial $\theta^v$. 

Let $A\in\mathbb{Z}_{\ge 0}^{k\times n}$ be an integer matrix with columns $a_1,\dotsc,a_n\in\mathbb{Z}_{\ge 0}^k$. We make the assumption that the \textit{column sums of $A$ are equal} to a positive integer $N>0$. This is the case for many meaningful statistical models in applications, such as discrete graphical models, hierarchical models, and staged-tree models \cite{serkan2002grobner,serkan2007finiteness,aida2022staged}. We define a map $\varphi^A:\mathbb{C}^k\to\mathbb{C}^n$ by
\begin{align*}
\varphi^A(\theta) = (\theta^{a_1},\dotsc,\theta^{a_n}).
\end{align*}
The coordinate functions of $\varphi^A$ are monomials, and the exponent vector of each monomial is a column of $A$. Thus, the map $\varphi^A$ is homogeneous in that $\varphi^A(\lambda p) = \lambda^N\varphi^A(p)$ and its image is a cone. 

The \textit{toric variety} $X_A$ associated to the integer matrix $A\in\mathbb{Z}_{\ge 0}^{k\times n}$ is the Zariski closure of the image $X_A = \overline{\im\varphi^A}$. The toric variety $X_A$ is an irreducible variety of dimension $\dim X_A = \rank A$. We remark that in the more general language of \cite{CoxLittleSchenck}, the set $\varphi^A((\mathbb{C}^\times)^k)$ is a dense torus which acts on the toric variety $X_A$ by coordinate-wise multiplication. Precisely, if $p\in X_A$ lies in the toric variety and $\theta\in(\mathbb{C}^\times)^k$, then the coordinate-wise product $p\varphi^A(\theta)\in X_A$ also lies in the toric variety. This is the origin of the term ``toric variety''---the variety $X_A$ contains a dense open set which is isomorphic to an algebraic torus and whose action on itself extends to the variety $X_A$.

The \textit{toric ideal} $I_A$ is the defining ideal of a toric variety $X_A$,
\begin{align*}
I_A = I(X_A) = \{f\in\mathbb{C}[x_1,\dotsc,x_n]:f(x) = 0~\text{ for all }~x\in X_A\}.
\end{align*}
The toric ideal $I_A$ is a prime ideal generated by pure binomials and generators for this ideal may be computed explicitly from the matrix $A$, as described in \cite[Proposition 6.2.4]{sullivant2018algebraic} restated here for convenience.

\begin{proposition}\label{ToricIdeal}
If $A\in\mathbb{Z}_{\ge 0}^{k\times n}$ is an integer matrix, then
\begin{align*}
    I_A = \langle p^{u} - p^{v} \; | \; u,v \in \mathbb{N}^n \text{ and } Au = Av  \rangle.
\end{align*}
Further, since the column sums of $A$ are equal, the ideal $I_A$ is a homogeneous ideal.
\end{proposition}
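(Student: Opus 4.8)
The plan is to realize $I_A$ as the kernel of the monomial substitution homomorphism and then analyze that kernel by sorting monomials according to their $A$--degree. Since $\mathbb{C}$ is infinite, a polynomial vanishes on $X_A=\overline{\im\varphi^A}$ if and only if it vanishes on $\im\varphi^A$, which holds if and only if it lies in the kernel of the pullback ring homomorphism $\varphi^*$ sending the coordinate $p_i\mapsto\theta^{a_i}$. Thus $I_A=\ker\varphi^*$, and the computation underlying everything is that $\varphi^*(p^u)=\theta^{Au}$ for every $u\in\mathbb{N}^n$, since exponents add when the images of the variables are multiplied.

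Write $J$ for the ideal on the right--hand side. For the containment $J\subseteq I_A$, I would apply $\varphi^*$ to a generator $p^u-p^v$ with $Au=Av$, obtaining $\theta^{Au}-\theta^{Av}=0$; hence every generator, and so all of $J$, lies in $\ker\varphi^*=I_A$. This direction is routine.

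The substantive direction is $I_A\subseteq J$. Given $f=\sum_u c_u p^u\in I_A$, I would group its monomials by the value of $Au$, so that $\varphi^*(f)=\sum_w\big(\sum_{u:\,Au=w}c_u\big)\theta^w$. Because distinct monomials $\theta^w$ are linearly independent over $\mathbb{C}$, the vanishing $\varphi^*(f)=0$ forces $\sum_{u:\,Au=w}c_u=0$ for every $w\in\mathbb{N}^k$. Fixing for each $w$ a single representative exponent $u^*(w)$ with $Au^*(w)=w$, each binomial $p^u-p^{u^*(w)}$ with $Au=w$ lies in $J$, so reducing $f$ modulo $J$ collapses every $A$--degree class onto its representative and leaves $\sum_w\big(\sum_{u:\,Au=w}c_u\big)p^{u^*(w)}=0$. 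Hence $f\in J$, giving equality. The main obstacle is organizing this reduction cleanly; the crux is that the linear independence of the $\theta^w$ matches exactly the relations $p^u-p^{u^*(w)}$ available inside $J$.

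For homogeneity, suppose $(1,\dotsc,1)\in\im A^T$, say $(1,\dotsc,1)=A^Tc$ for some $c$; equivalently $\langle c,a_j\rangle=1$ for every column $a_j$ of $A$. Then for any generator $p^u-p^v$ with $Au=Av$, the total degrees satisfy $\sum_j u_j=\langle c,Au\rangle=\langle c,Av\rangle=\sum_j v_j$, so each binomial generator is homogeneous. An ideal admitting a homogeneous generating set is homogeneous, which yields the final claim.
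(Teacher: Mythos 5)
Your proof is correct: identifying $I_A$ with the kernel of the substitution homomorphism $\varphi^*$, checking the easy containment on generators, and then using linear independence of the monomials $\theta^w$ together with reduction onto a representative of each $A$--degree class is exactly the standard argument, as is the pairing with $c$ for homogeneity. The paper itself states this proposition without proof, deferring to the literature (Sullivant's book), and your argument is essentially the proof given there, so there is nothing to flag.
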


There are software such as \texttt{4ti2} \cite{4ti2} and \texttt{Macaulay2} \cite{M2} that contain methods used to effectively compute Gr\"{o}bner bases for toric ideals. This allows one to make several computations with toric ideals, such as determining containment of points in a toric variety and computing elimination ideals.

We write $\mathbb{R}_{\ge 0}^n$ and $\mathbb{R}_{>0}^n$ for the set of points in $\mathbb{R}^n$ with non-negative coordinates and positive coordinates respectively. The \textit{probability simplex} 
\begin{align*}
\Delta_{n-1} = \{(p_1,\dotsc,p_n)\in\mathbb{R}_{\ge 0}^n: \sum_{i=1}^n p_i = 1\}
\end{align*}
is the set of probability distributions in $\mathbb{R}^n$. Our statistical models of interest are intersections of toric varieties with the probability simplex.

\begin{definition}
    The \textit{log-linear model} defined by an integer matrix $A\in\mathbb{Z}_{\ge 0}^{k\times n}$ is the set
    \begin{align*}
        \mathcal{M}_A = X_A \cap \Delta_{n-1}.
    \end{align*}
    The set $\mathcal{M}_A^{>0}$ is the set of points in the log-linear model $\mathcal{M}_A$ with non-zero coordinates.
\end{definition}

Our definition of log-linear model differs from that found in \cite[Chapter 6]{sullivant2018algebraic}. Indeed, we allow for probability distributions having zero coordinates, lying in the boundary of the probability simplex. The name ``log-linear'' originates as for those points $p\in\mathcal{M}_A^{>0}$ with non-zero coordinates, the coordinate-wise logarithm $\log(p)$ lies in the linear space $\im A^T$. Many familiar discrete probability models are in fact log-linear models, such as the independence model, undirected graphical models, and hierarchical models. We describe the topology of the statistical model $\mathcal{M}_A$ via the real structure of the toric variety $X_A$.

Given a matrix $A\in\mathbb{Z}_{\ge 0}^{k\times n}$, the \textit{non-negative toric variety} $X_A^{\ge 0} = X_A \cap \mathbb{R}_{\ge 0}^n$ is the set of points in $X_A$ with non-negative real coordinates. Similarly, we write $X_A^{>0} = X_A\cap \mathbb{R}_{>0}^n$ for the set of points in $X_A$ with positive real coordinates. If $A\in\mathbb{Z}_{\ge 0}^{k\times n}$ is such that $(1,\dotsc,1)\in\im A^T$, then the log-linear model $\mathcal{M}_A$ may be considered the projectivization of the non-negative toric variety $X_A^{\ge 0}$. Indeed, since $I_A$ is homogeneous, $X_A^{\ge 0}$ is a cone and we may scale each nonzero point so that the sum of the coordinates is equal to one. The algebraic moment map provides a way to understand the topology of the projectivization of the non-negative toric variety and hence, of our log-linear model $\mathcal{M}_A$.

The algebraic moment map is defined on the projectivization of a non-negative toric variety in \cite{Fulton,Sottile}. With the identification of the projectivization of the non-negative toric variety with the log-linear model $\mathcal{M}_A$, the algebraic moment map is defined as follows.

\begin{definition}
    The \textit{algebraic moment map} $\mu_A:\mathcal{M}_A\to \mathbb{R}^n$ is defined by $\mu_A(p) = Ap$. 
\end{definition}

For $p\in \mathcal{M}_A$, the image $\mu_A(p)$ is a convex combination of the columns of $A$. Hence, if $P_A$ denotes the polytope which is the convex hull of the columns of $A$, then the image of the algebraic moment map is contained in $P_A$. In fact, the algebraic moment map is a homeomorphism of $\mathcal{M}_A$ with $P_A$ as seen in \cite[Theorem 8.5]{Sottile}. Further, this map restricts to a homeomorphism of $\mathcal{M}_A^{>0}$ with the interior of the polytope $\interior(P_A)$ as described in \cite[Chapter 4.2]{Fulton}.

\begin{theorem}[\cite{Fulton,Sottile}]\label{thm:alg-moment-map}
If $A\in\mathbb{Z}_{\ge 0}^{k\times n}$ is such that $(1,\dotsc,1)\in\im A^T$, then the algebraic moment map $\mu_A:\mathcal{M}_A\to P_A$ is a homeomorphism. Further, the restriction $\mu_A:\mathcal{M}_A^{>0}\to\interior(P_A)$ is a homeomorphism. 
\end{theorem}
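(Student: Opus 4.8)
The plan is to reduce the entire statement to the case of the positive part $\mathcal{M}_A^{>0}$ mapping onto the interior $\interior(P_A)$, and to obtain the global homeomorphism from a point-set argument. First I would note that $\mu$ is the restriction of the linear map $p\mapsto Ap$, hence continuous; that $\mathcal{M}_A$ is compact (a closed subset of the compact simplex $\Delta_{n-1}$); and that $P_A$ is Hausdorff. A continuous bijection from a compact space to a Hausdorff space is automatically a homeomorphism, so it suffices to prove that $\mu:\mathcal{M}_A\to P_A$ is a \emph{bijection}; continuity of the inverse then comes for free and no explicit formula for $\mu^{-1}$ is needed.

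To prove bijectivity I would stratify both sides by facial sets. Every face of $P_A$ has the form $P_F:=\conv\{a_i:i\in F\}$ for a facial set $F$, and $P_A$ is the disjoint union of the relative interiors of its faces. On the source side, Lemma \ref{lem:boundarypolytope} shows $\supp(p)$ is a facial set for every $p\in\mathcal{M}_A$, so $\mathcal{M}_A$ is the disjoint union of the strata $\{p:\supp(p)=F\}$. If $\supp(p)=F$, then $\mu(p)=\sum_{i\in F}p_ia_i$ is a strictly positive convex combination of the $a_i$ with $i\in F$ and so lies in the relative interior of $P_F$; conversely each point in the relative interior of $P_F$ is such a combination. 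Thus $\mu$ respects the two stratifications, and it is enough to show that for each facial set $F$ the map $\mu$ restricts to a bijection from $\{p\in\mathcal{M}_A:\supp(p)=F\}$ onto the relative interior of $P_F$. Restricting to the coordinates in $F$ identifies this stratum with $\mathcal{M}_{A_F}^{>0}$, where $A_F$ is the submatrix of columns indexed by $F$, and identifies the relative interior of $P_F$ with $\interior(P_{A_F})$. Hence everything reduces to the positive case $\mu:\mathcal{M}_A^{>0}\to\interior(P_A)$, applied to each $A_F$.

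For injectivity on the positive part, take $p,p'\in\mathcal{M}_A^{>0}$ with $Ap=Ap'$. As in the proof of Proposition \ref{image}, a point of $X_A$ with all coordinates nonzero satisfies $\log(p)\in\im A^T$, so $\log(p)-\log(p')=A^Tw$ for some $w\in\R^k$. Pairing $Ap-Ap'=0$ with $w$ gives
\[
0=\langle Ap-Ap',w\rangle=\langle p-p',A^Tw\rangle=\sum_i(p_i-p_i')\bigl(\log(p_i)-\log(p_i')\bigr).
\]
Each summand is nonnegative because $\log$ is strictly increasing, and vanishes only when $p_i=p_i'$; since the sum is zero, $p=p'$.

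The main obstacle is surjectivity on the positive part, which is where the hypothesis $q\in\interior(P_A)$ is essential. Writing $\theta_i=e^{t_i}$, the normalized parametrization $p_j(t)=e^{\langle a_j,t\rangle}/\sum_\ell e^{\langle a_\ell,t\rangle}$ realizes every element of $\mathcal{M}_A^{>0}$, and the log--partition function $\psi(t)=\log\sum_je^{\langle a_j,t\rangle}$ is convex with $\nabla\psi(t)=\mu(p(t))$. Fixing $q\in\interior(P_A)$, I would minimize the convex function $g(t)=\psi(t)-\langle q,t\rangle$. Along any direction $d$ for which the values $\langle a_j,d\rangle$ are not all equal, $g(t+sd)$ grows asymptotically like $s\bigl(\max_j\langle a_j,d\rangle-\langle q,d\rangle\bigr)$ as $s\to+\infty$, and the coefficient is strictly positive precisely because an interior point of $P_A$ cannot attain the maximum of a nonconstant linear functional on $P_A$; in the complementary directions, where $\langle a_j,d\rangle$ is independent of $j$, $g$ is constant. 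Thus $g$ is coercive modulo its lineality and attains a minimum at some $t^\ast$, where $\nabla\psi(t^\ast)=q$, i.e.\ $\mu(p(t^\ast))=q$, producing the required preimage. Assembling the strata, $\mu$ is a continuous bijection $\mathcal{M}_A\to P_A$, hence a homeomorphism by the compactness argument; and since $\mathcal{M}_A^{>0}$ is exactly the stratum mapping onto $\interior(P_A)$, the restriction $\mu:\mathcal{M}_A^{>0}\to\interior(P_A)$ is a homeomorphism as well.
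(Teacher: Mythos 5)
Your proof is correct in all essentials, but note that the paper itself contains no proof of this statement: Theorem \ref{thm:alg-moment-map} is imported from \cite{Fulton,Sottile}, so the comparison is between your argument and the classical one in those references --- and they largely coincide. Your four ingredients (the compact-to-Hausdorff trick to reduce to bijectivity; stratification of both $\mathcal{M}_A$ and $P_A$ by facial sets to reduce to the positive part; injectivity via the monotonicity identity $\sum_i (p_i - p_i')(\log p_i - \log p_i') = 0$, which is legitimate since the argument in Proposition \ref{image} gives $\log p \in \im A^T$ for points of $X_A$ with positive coordinates; surjectivity by minimizing the convex function $\psi(t) - \langle q, t\rangle$, which is coercive transverse to its lineality space precisely because $q$ is relatively interior) are exactly the ingredients of Birch's theorem, and each is sound. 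If you were to write this out in full, two glossed steps should be made explicit. First, the identification of the stratum $\{p \in \mathcal{M}_A : \supp(p) = F\}$ with $\mathcal{M}_{A_F}^{>0}$ needs the characteristic-vector/torus-action argument (the one the paper uses to prove Theorem \ref{thm:toriccompletion}): points of $X_{A_F}$ with positive coordinates, extended by zeros on $[n]\setminus F$, lie in $X_A$ because $\chi_F \in X_A$ and the dense torus acts on $X_A$ by coordinatewise multiplication. Second, the hypothesis $(1,\dotsc,1) \in \im A^T$ is exactly what guarantees that the normalized parametrization $p_j(t) = e^{\langle a_j, t\rangle}/\sum_\ell e^{\langle a_\ell, t \rangle}$ stays inside $X_A$ (without it, normalizing leaves the variety), and it is inherited by each submatrix since $A^T c = (1,\dotsc,1)$ implies $A_F^T c = (1,\dotsc,1)$; this is where the hypothesis enters and is worth flagging. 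What your route buys over the paper's bare citation is a self-contained proof relying only on facts the paper has already established plus standard convex analysis.
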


Thus, the topology of the log-linear model $\mathcal{M}_A$ is equivalent to that of the polytope $P_A$, and similarly for the topology of $\mathcal{M}_A^{>0}$ and $\interior(P_A)$. Hence, both $\mathcal{M}_A$ and $\mathcal{M}_A^{>0}$ are contractible spaces and in particular, they are connected. Further, the set $\mathcal{M}_A^{>0}$, consisting of points of the log-linear model with non-zero coordinates, is the interior of the log-linear model $\mathcal{M}_A$ and is dense in $\mathcal{M}_A$. We will make use of this connection in Section 4.

\section{Completion to the Toric Variety}\label{sec:toricresults}
We first consider the completion problem to a toric variety. Fix an integer matrix $A\in\mathbb{Z}_{\ge 0}^{k\times n}$ and the corresponding toric variety $X_A$. Write $[n] = \{1,\dotsc,n\}$ for the set indexing the coordinates of $\mathbb{C}^n$. A subset $E\subseteq[n]$ determines a coordinate projection $\pi_E:\mathbb{C}^n\to\mathbb{C}^E$ to those coordinates indexed by $E$. We say a \textit{partial observation} is a point $p_E\in\mathbb{C}^E$, and a \textit{completion} of a partial observation $p_E$ to the toric variety $X_A$ is a point $p\in X_A$ that projects to $p_E$, $\pi_E(p) = p_E$. 

Our goal in this section is to determine the completable region $\pi_E(X_A)$, of partial observations which can be completed to the toric variety $X_A$. We accomplish this by first describing the image of the monomial map $\varphi^{A}$ and determining when a partial observation $p_E$ is completable to the image $\im\varphi^A$.

\subsection{The Image of the Monomial Map}
For a point $p = (p_1,\dotsc,p_n)\in\mathbb{C}^n$, the \textit{support} of $p$ is the set
\begin{align*}
\supp(p) = \{i\in[n]:p_i\ne 0\},
\end{align*}
consisting of the indices of non-zero coordinates of $p$. The following definition, taken from \cite{GeigerMeekSturmfels}, provides a necessary condition for a point to lie in the image $\im\varphi^A$.

\begin{definition}[\cite{GeigerMeekSturmfels}]\label{A-feasibility}
    Let $A\in\mathbb{Z}_{\ge 0}^{k\times n}$ be a matrix with column vectors $a_1,\dotsc,a_n\in\mathbb{Z}^k$. A point $p\in\mathbb{C}^n$ is \textit{$A$-feasible} if for every $j\in[n]\setminus\supp(p)$, the support $\supp(a_j)$ is not contained in the union $\bigcup_{l\in\supp(p)} \supp(a_l)$.
\end{definition}

We note that $A$-feasibility is equivalent to the notion of zero-consistency in \cite[Definition 2.1]{kahle2017geometry}. For points lying in the toric variety $p\in X_A$, $A$-feasibility is both necessary and sufficient for $p$ to lie in the image $\im\varphi^A$. This was proved in \cite[Theorem 3.1]{GeigerMeekSturmfels} for the non-negative toric variety, and we extend their result for the complex toric variety.

\begin{proposition}
    \label{image}
    If $A\in\mathbb{Z}_{\ge 0}^{k\times n}$ is an integer matrix, then the image of the map $\varphi^A$ is given by the set
    \begin{align*}
        \im\varphi^A = \{p\in X_A:p~\text{\normalfont is $A$-feasible}\}.
    \end{align*}
\end{proposition}
\begin{proof}
    Write $a_1,\dotsc,a_n$ for the columns of $A$ and $a_{ij}$ for the $(i,j)$-entry of $A$. If $p\in\im\varphi^A$, then $p\in\overline{\im\varphi^A} = X_A$. Similarly, if $p = \varphi^A(\theta)$ then for $j\in[n]\setminus\supp(p)$, there is an $i\in[k]$ such that $a_{ij} > 0$ and $\theta_i = 0$ so that $i\in\supp(a_j)$. However, $i\not\in\supp(a_l)$ for each $l\in\supp(p)$ since otherwise $p_l=0$. Therefore, $p$ is $A$-feasible. Thus, the inclusion $\im\varphi^A \subseteq \{p\in X_A:p~\text{\normalfont is $A$-feasible}\}$ holds.
    
    For the reverse inclusion, let $p\in X_A$ be $A$-feasible. Without loss of generality we may assume that $p$ only has non-zero entries. Indeed, since $p$ is $A$-feasible, we may restrict $\varphi^A$ to the coordinate subspace of $\mathbb{C}^k$ such that $\theta_i = 0$ for $i\in[n]\setminus\bigcup_{l\in\supp(p)}\supp(a_l)$. We compute a preimage of this restriction by discarding all zero entries of $p$, and reinserting them in the appropriate places afterwards.

    Let $p\in X_A$ have non-zero coordinates. From Proposition \ref{ToricIdeal}, for any $u\in\mathbb{Z}^n$ such that $Au = 0$, we must have that $p^u = 1$. By taking the logarithm and writing $\log(p)$ for the coordinate-wise logarithm of $p$, one finds that $u^T\log(p) = 0$. Thus, $\log(p)$ annihilates the kernel of $A$, or equivalently, lies in the image of $A^T$. Writing $\log(p) = A^Tv$ for some vector $v\in\mathbb{R}^k$ and applying coordinate-wise exponentiation, one finds that $p = \varphi^A(e^v)\in\im\varphi^A$. 
\end{proof}

If $A_E$ denotes the submatrix of $A$ whose columns are indexed by $E$, then there is an equality $\varphi^{A_E} = \pi_E\circ\varphi^A$. Thus, there is an equality of the images $\pi_E(\im\varphi^A) = \im\varphi^{A_E}$, and the defining ideal of the image $\pi_E(\im\varphi^A)$ is the toric ideal given by
\begin{align*}
I(\pi_E(\im\varphi^A)) = I(\im \varphi^{A_E}) = I(X_{A_E}) = I_{A_E}.
\end{align*}
This ideal may be computed via Proposition \ref{ToricIdeal}, or as an elimination ideal of $I_A$. Indeed, $I_{A_E}$ is obtained from $I_A$ by eliminating the variables $x_i$ for $i\in[n]\setminus E$. We obtain the following corollary of Proposition \ref{image} describing when a partial observation is completable to the image $\im\varphi^A$.

\begin{corollary}\label{cor:imagecompletion}
    Let $p_E\in\mathbb{C}^E$ be a partial observation.
    \begin{enumerate}
    \item[1.] $p_E$ is completable to a point of the image $\im\varphi^A$ if and only if $p_E\in X_{A_E}$ and $p_E$ is $A_E$-feasible. 
    
    \item[2.] If $p_E\in\mathbb{R}_{\ge 0}^E$ has non-negative real coordinates, then $p_E$ is completable to $\im\varphi^A$ if and only if there is a completion $p\in\im\varphi^A$ with non-negative real coordinates.
    \end{enumerate}
\end{corollary}
\begin{proof}
The first portion follows from the equality $\im\varphi^{A_E} = \pi_E(\im\varphi^A)$. A partial observation $p_E\in\mathbb{C}^E$ is completable to the image $\im\varphi^A$ if and only if it lies in the image $\im\varphi^{A_E}$. By Proposition \ref{image}, this is exactly when $p_E\in X_{A_E}$ and $p_E$ is $A_E$-feasible.

For the second portion, if $p_E\in\mathbb{R}_{\ge 0}^E$ is a partial observation which has non-negative real coordinates and is completable to a point $p\in\im\varphi^A$, then the point $|p|\in\im\varphi^A$ obtained by taking the coordinate-wise absolute value of $p$ is a completion of $p_E$ with non-negative real coordinates.
\end{proof}

\subsection{Completing to the Toric Variety} 
We now determine the set of partial observations which are completable to the toric variety $X_A$. We utilize the polyhedral structure of the polytope $P_A$, which is the convex hull of the columns of the matrix $A$. We note that as the column sums of $A$ are equal to $N>0$, the polytope $P_A$ is contained in the hyperplane determined by the equation $\sum_{i=1}^n p_i = N$ and has dimension $\rank A - 1$.

\begin{definition}
A \textit{facial set} of $A$ is a subset $F\subseteq[n]$ such that there is a vector $v\in\mathbb{R}^k$ satisfying $v^T a_i = 0$ for $i\in F$ and $v^T a_i>0$ for $i\in[n]\setminus F$. The vector $v$ is a \textit{inner normal vector} for the face $F$.
\end{definition}

Additionally, a \textit{facet} is a proper face which is not contained in any strictly larger proper face. Geometrically, a facial set indexes the columns of $A$ that lie in a face of the polytope $P_A$. Further, an inner normal vector $v$ of a facial set $F$ is an inner normal vector of a face of $P_A$. More on the relationship between polytopes and their faces can be found in \cite{Ewald}.

The following result from \cite[Lemma $A.2$]{GeigerMeekSturmfels} classifies the support of a point in $X_A$ via the facial sets of $A$.

\begin{lemma}[\cite{GeigerMeekSturmfels}]\label{lem:boundarypolytope}
    Let $A \in \mathbb{Z}_{\ge 0}^{k \times n}$ be a matrix whose columns are the vectors $a_1,\dotsc,a_n\in\mathbb{R}^k$. If $p\in X_A$, then $\supp(p)$ is a facial set of $A$.
\end{lemma}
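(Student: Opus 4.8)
The plan is to combine the density of the dense torus in $X_A$ with a theorem of the alternative to manufacture the required inner normal vector. Write $F = \supp(p)$. Since $\varphi^A((\mathbb{C}^\times)^k)$ is dense in $X_A = \overline{\im\varphi^A}$, I first pick a sequence $\theta^{(m)}\in(\mathbb{C}^\times)^k$ with $\varphi^A(\theta^{(m)})\to p$, and set $w^{(m)} = \log|\theta^{(m)}|\in\mathbb{R}^k$, the coordinatewise logarithm of the moduli. The $j$-th coordinate of $\varphi^A(\theta^{(m)})$ has modulus $e^{a_j^T w^{(m)}}$, so continuity of the coordinate projections gives
\begin{align*}
a_j^T w^{(m)} \longrightarrow \log|p_j|.
\end{align*}
Hence $a_j^T w^{(m)}$ converges to a finite limit for $j\in F$, while $a_j^T w^{(m)}\to -\infty$ for $j\in[n]\setminus F$. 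This asymptotic dichotomy is the only input the rest of the argument uses.

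It remains to produce a single $v\in\mathbb{R}^k$ with $v^T a_i = 0$ for $i\in F$ and $v^T a_i > 0$ for $i\notin F$. I would obtain this from a Farkas-type alternative applied to the linear system in $v$ (after normalizing the strict inequalities to $v^T a_j \ge 1$): either such a $v$ exists, in which case $F$ is a facial set and we are done, or there exist scalars $\lambda_j\ge 0$ for $j\notin F$, not all zero, together with $\mu_j\in\mathbb{R}$ for $j\in F$, such that
\begin{align*}
\sum_{j\notin F}\lambda_j a_j + \sum_{j\in F}\mu_j a_j = 0.
\end{align*}
This is precisely the standard certificate of infeasibility for the system defining $v$.

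To eliminate the second alternative, I pair the displayed relation with $w^{(m)}$. Applying $(\cdot)^T w^{(m)}$ to both sides gives $\sum_{j\notin F}\lambda_j\,(a_j^T w^{(m)}) = -\sum_{j\in F}\mu_j\,(a_j^T w^{(m)})$ for every $m$. The right-hand side converges to the finite number $-\sum_{j\in F}\mu_j\log|p_j|$, whereas on the left each $a_j^T w^{(m)}\to -\infty$ with nonnegative weights $\lambda_j$ that are not all zero, forcing the left-hand side to $-\infty$. This contradiction rules out the certificate, so the first alternative holds and $F$ is a facial set. The degenerate case $F=[n]$ is handled separately by taking $v=0$.

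The delicate point, and my reason for invoking duality rather than arguing directly, is the \emph{strict} inequality $v^T a_i > 0$ off the support. The naive route of normalizing $w^{(m)}/\|w^{(m)}\|$ and passing to a convergent subsequence only yields a limit direction $u$ with $a_i^T u = 0$ on $F$ and $a_i^T u \le 0$ off $F$, because a coordinate may diverge to $-\infty$ strictly more slowly than $\|w^{(m)}\|$ grows, collapsing the strict inequality to a weak one. The Farkas alternative circumvents this by extracting strictness from the linear-algebraic structure of the columns $a_j$, and the contradiction above is exactly the device that converts the qualitative statement ``$a_j^T w^{(m)}\to -\infty$'' into the quantitative obstruction required.
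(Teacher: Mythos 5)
Your proof is correct, but note that the paper contains no proof of this lemma to compare against: it is imported verbatim from Geiger--Meek--Sturmfels, so your argument is a self-contained replacement rather than a variant of anything in the text. The two ingredients---(i) approximating $p$ by torus points $\varphi^A(\theta^{(m)})$ and extracting from the moduli the dichotomy $a_j^T w^{(m)}\to\log|p_j|$ for $j\in\supp(p)$ versus $a_j^T w^{(m)}\to-\infty$ for $j\notin\supp(p)$, and (ii) a Farkas-type alternative whose infeasibility certificate $\sum_{j\notin F}\lambda_j a_j+\sum_{j\in F}\mu_j a_j=0$, with $\lambda_j\ge 0$ not all zero, is paired against $w^{(m)}$ to force a finite limit to equal $-\infty$---fit together exactly as you claim, and your closing remark correctly identifies the genuine difficulty: normalizing $w^{(m)}$ and passing to a limit direction only yields weak inequalities off the support, so duality is really needed to recover strictness. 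A further merit is that taking moduli makes the argument work directly over $\mathbb{C}$, which is the generality in which the paper states the lemma, whereas the cited source argues for nonnegative real points. One step should be made explicit: choosing a \emph{classically} convergent sequence of torus points uses that $\varphi^A((\mathbb{C}^\times)^k)$ is dense in $X_A$ for the Euclidean topology, not merely the Zariski topology; this is standard because the image is constructible (Chevalley), and the Zariski and Euclidean closures of a constructible subset of $\mathbb{C}^n$ coincide, but since $X_A$ is defined as a Zariski closure the sentence belongs in the proof. With that addition the argument is complete.
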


If $F\subseteq[n]$ is a facial set of $A\in\mathbb{Z}_{\ge 0}^{k\times n}$, we define the \textit{characteristic vector} $\chi_F$ by $(\chi_F)_i = 1$ if $i\in F$ and $(\chi_F)_i = 0$ if $i\in[n]\setminus F$. By \cite[A.2]{GeigerMeekSturmfels}, $\chi_F\in X_A$. Thus, for every facial set $F$ of $A$, there is a point of $X_A$ with support $F$.

Lemma \ref{lem:boundarypolytope} and Corollary \ref{cor:imagecompletion} allow us to determine when a partial observation $p_E\in X_{A_E}$ is completable to the toric variety $X_A$. As the subset $E\subseteq[n]$ indexes the coordinates of $\mathbb{C}^E$, we may write $\supp(\pi_E(p))=\supp(p)\cap E$ for a point $p\in\mathbb{C}^n$.

\begin{theorem}
    \label{thm:toriccompletion}
    Let $p_E\in\mathbb{C}^E$ be a partial observation.
    \begin{enumerate}
    \item[1.] $p_E\in\mathbb{C}^E$ is completable to a point $p\in X_A$ if and only if $p_E\in X_{A_E}$ and there is a facial set $F$ of $A$ such that $\supp(p_E) = F\cap E$.

    \item[2.] If $p_E\in\mathbb{R}_{\ge 0}^E$ has non-negative real coordinates, then $p_E$ is completable to $X_A^{\ge 0}$ if and only if $p_E$ is completable to $X_A$.
    \end{enumerate}
\end{theorem}
\begin{proof}
If $p_E = \pi_E(p)\in \pi_E(X_A)$ for some $p\in X_A$, then by Lemma \ref{lem:boundarypolytope} $\supp(p)$ is a facial set of $A$ and $\supp(p_E)=\supp(p)\cap E$. 

Conversely, if $p_E\in X_{A_E}$ and $F$ is a facial set of $A$ such that $\supp(p_E)= E\cap F$, then $p_E$ is $A_{E\cap F}$-feasible since $p_E$ has non-zero coordinates $(p_E)_i$ for every $i\in E\cap F = \supp(p_E)$. By Corollary \ref{cor:imagecompletion}, this implies there exists $\theta\in\mathbb{C}^k$ such that the $i$-th coordinates $(\varphi^A(\theta))_i = (p_E)_i$ are equal for every $i\in E\cap F$. Consider the coordinate-wise product $\varphi^A(\theta)\chi_F\in X_A$. If $i\in E\cap F$, then the $i$-th coordinates $(\varphi^A(\theta)\chi_F)_i = (p_E)_i$ are equal. Similarly, if $i\in E\setminus F$, then $(\varphi^A(\theta)\chi_F)_i = (p_E)_i = 0$. Thus, $\varphi^A(\theta)\chi_F\in X_A$ is a completion of $p_E$ to $X_A$. 

For the second portion, if $p_E\in\mathbb{R}_{\ge 0}^E$ is a partial observation with completion $p\in X_A$ to $X_A$, then the coordinate-wise absolute value $|p|\in X_A^{\ge 0}$ is a completion to $X_A^{\ge 0}$.
\end{proof}

Given a partial observation $p_E\in\mathbb{C}^E$, Lemma \ref{lem:boundarypolytope} and Theorem \ref{thm:toriccompletion} allow us to quickly determine properties of completions to $X_A$ by studying the facial sets of $A$. If $F_1$ and $F_2$ are facial sets, then their intersection $F_1\cap F_2$ is a facial set. In particular, for any set $E\subseteq[n]$, there is a minimal facial set $F$ containing $E$, which is the intersection of all facial sets containing $E$. 

\begin{corollary}\label{cor:polytopefaces}
    Let $A\in\mathbb{Z}_{\ge 0}^{k\times n}$ be an integer matrix, $E\subseteq[n]$ be a subset of the coordinates, $p_E\in\mathbb{C}^E$ be a partial observation, and $p\in X_A$ be a completion of $p_E$. If $F\subseteq[n]$ is the smallest facial set containing $\supp(p_E)$, then $F\subseteq\supp(p)$. 
\end{corollary}

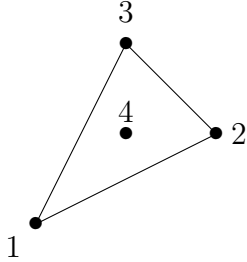
\begin{figure}[H]
    \centering
    \begin{tikzpicture}[scale=1.2]
    \draw (-1.25, -1.25) node {$1$};
    \draw (-1, -1) node {$\bullet$};
    \draw (1.25, 0) node {$2$};
    \draw (1, 0) node {$\bullet$};
    \draw (0, 1.35) node {$3$};
    \draw (0, 1) node {$\bullet$};
    \draw (0,0) node {$\bullet$};
    \draw (0, 0.25) node {$4$};

    \draw (-1, -1) -- (1,0);
    \draw (1, 0) -- (0, 1);
    \draw (0,1) -- (-1,-1);
    \end{tikzpicture}
    \caption{An example polytope $P_A$. The point corresponding to the 4-th column of $A$ lies in the relative interior of the polytope.}
    \label{fig:polytopeexample}
\end{figure}

\begin{example}\label{ex:polytope}
    Consider an integer matrix $A\in\mathbb{Z}_{>0}^{k\times 4}$ such that the polytope $P_A$ is as in Figure \ref{fig:polytopeexample}. If $E = \{4\}$ then the smallest face containing $E$ is the whole polytope $P_A$. Thus, the smallest facial set containing $E$ is $F = \{1,2,3,4\}$. By Corollary~\ref{cor:polytopefaces}, for any non-zero partial observation $p_E\in\mathbb{C}^E$, any completion to $X_A$ must have all non-zero coordinates. If $p_E = 0$, then any completion has support given by one of the proper facial sets, $\emptyset$, $\{1,2\}$, $\{1,3\}$, or $\{2,3\}$.
\end{example}

\section{Completion to the Log-Linear Model}\label{sec:loglinearresults}
Let $E\subseteq[n]$ be a subset indexing some of the coordinates of $\mathbb{R}^n$ and $\pi_E:\mathbb{R}^n\to\mathbb{R}^E$ be the corresponding coordinate projection. Given an integer matrix $A\in\mathbb{Z}_{\ge 0}^{k\times n}$, we consider the completions of a partial observation $p_E\in\mathbb{R}^E$ to the log-linear model $\mathcal{M}_A$. Our analysis of the problem relies on our ability to understand completions to the non-negative toric variety $X_A^{\ge 0}$ as described in the previous section.

We provide a description of the interior and the boundary of the \textit{completable region} $\pi_E(\mathcal{M}_A)$, which consists of partial observations which can be completed to a point in $\mathcal{M}_A$: see Theorem \ref{thm:boundary-interior-image}. This is accomplished in Section \ref{sec:branch-locus} by analyzing the singular locus of the projection $\pi_E:\mathcal{M}_A^{>0}\to\mathbb{R}^E$, which is the locus of points where the differential $(d\pi_E)_p$ drops rank. For a point $p\in\mathcal{M}_A^{>0}$ that does not lie in the singular locus, the image $\pi_E(p)$ lies in the interior of the completable region $\pi_E(\mathcal{M}_A)$. Thus, the problem of determining the interior and the boundary of the completable region rests on understanding the image of the singular locus and the image of the boundary of the log-linear model $\partial\mathcal{M}_A = \mathcal{M}_A\setminus\mathcal{M}_A^{>0}$. 

In addition, we enumerate the completions for a partial observation with non-zero coordinates lying in the completable region. It will be shown that when there are finitely many completions, a partial observation can have either one or two completions to the log-linear model $\mathcal{M}_A$ depending on the subset $E\subseteq[n]$ chosen and whether the partial observation lies in the interior or the boundary of the completable region---see Theorem \ref{thm:boundary-completions} and Theorem \ref{thm:interior-completions}. Our results determine the number of completions of a partial observation to $\mathcal{M}_A$, but produce no general algorithm for computing these completions. 

We work under the mild assumption that our subset $E\subseteq[n]$ satisfies $|E| = \rank A_E = \rank A - 1 = \dim \mathcal{M}_A$. The assumption that $|E| = \rank A_E \le \dim\mathcal{M}_A$ guarantees that the image $\pi_E:\mathcal{M}_A\to\mathbb{R}^E$ is full-dimensional---see Corollary \ref{cor:dom-image}. Further, we assume that $|E| = \dim\mathcal{M}_A$ so that one expects only finitely many completions to the log-linear model $\mathcal{M}_A$ for a general partial observation. If $|E| < \dim\mathcal{M}_A$, then any partial observation lying in the completable region has infinitely many completions to $\mathcal{M}_A$ and we leave it as an open problem to describe the variety of completions in this case.

\subsection{The Singular Locus of a Coordinate Projection}\label{sec:branch-locus}
We note that for a map of varieties $\pi:X\to Y$, there is a maximal rank of the differential $d\pi_p:T_p X\to T_p Y$ for $p\in X$. Further, there is a Zariski open set (dense, open, path-connected set whose complement is a subvariety) $U\subseteq X$ for which this maximal rank is attained. The complement of this open set is the subvariety of $X$ where the rank of the differential drops.

\begin{definition}
The \textit{singular locus} of a map of varieties $\pi:X\to Y$ is the subvariety $X$ consisting of points $p\in X$ such that the differential $d\pi_p:T_p X\to T_p Y$ has rank less than the maximal rank.
\end{definition}

We begin by computing the tangent space at a point of $\mathcal{M}_A^{>0}$ considered as an open subset of a variety. Let $A\in\mathbb{Z}_{\ge 0}^{k\times n}$ have entries $a_{ij}$ for $1\le i\le k$ and $1\le j\le n$. By differentiating the monomial map $\varphi^A:\mathbb{R}^k_{>0}\to \mathbb{R}^n_{>0}$ at $\theta\in\mathbb{R}^k_{>0}$, we obtain the map $d\varphi_\theta^A:\mathbb{R}^k\to\mathbb{R}^n$ on tangent spaces defined by 
\begin{align*}
(d\varphi_\theta^A)_{ij} = a_{ji}\frac{1}{\theta_j}\varphi^A_i(\theta). 
\end{align*} 
By writing $p = \varphi^A(\theta)$, the image of the differential may be written as 
\begin{align*}
\im d\varphi^A_\theta = \{(p_1 v_1,\dotsc,p_n v_n)\in\mathbb{R}^n: v\in\im A^T\}.
\end{align*}
Since this image has dimension $\rank A = \dim X_A^{>0}$, it follows that the image $\im d\varphi_\theta^A$ coincides with the tangent space of the image $X_A^{>0}$ at $p$. Thus, $X_A^{>0}$ is smooth at each point and the tangent space is given by
\begin{align*}
T_p X_A^{>0} = \im d\varphi^A_\theta = \{(p_1 v_1,\dotsc,p_n v_n)\in\mathbb{R}^n: v\in\im A^T\}.
\end{align*}
Write $H\subseteq\mathbb{R}^n$ for the hyperplane defined by the equation $\sum_i x_i = 1$ so that $T_p H = \{x\in\mathbb{R}^n: \sum_i x_i = 0\}$. Since $(1,\dotsc,1)\in\im A^T$, it follows that $(p_1,\dotsc,p_n)\in T_p X_A^{>0}$ so that $T_p X_A^{>0} + T_p H = \mathbb{R}^n$---that is, $X_A^{>0}$ intersects $H$ transversally at each point. Thus, we may regard $\mathcal{M}^{>0}_A = X_A^{>0}\cap H$ as a smooth manifold of dimension $\dim \mathcal{M}_A^{>0} = \dim X_A^{>0} - 1 = \rank A - 1$. Further, we write its tangent space as
\begin{align*}
T_p \mathcal{M}_A^{>0} = \{(p_1 v_1,\dotsc,p_n v_n)\in\mathbb{R}^n: v\in\im A^T,~v^Tp = 0\}.
\end{align*}

We now fix a subset $E\subseteq[n]$ such that $|E| = \rank A_E = \rank A - 1$ and consider the corresponding coordinate projection $\pi_E:\mathbb{R}^n\to\mathbb{R}^E$ and its restriction to $\mathcal{M}^{>0}_A$. We show that its differential $(d\pi_E|_{\mathcal{M}_A^{>0}})_p$ is an isomorphism for most points $p\in\mathcal{M}_A^{>0}$, and give an explicit description of the set of points where this differential is not an isomorphism, or equivalently, of the singular locus of $\pi_E$. The following proposition will aid in identifying whether the differential $(d\pi_E|_{\mathcal{M}_A^{>0}})_p$ is an isomorphism. As a linear map, we identify the differential $(d\pi_E)_p$ with the map $\pi_E$ itself.

\begin{proposition}\label{prop:lin-alg}
Let $E\subseteq[n]$ is such that $|E| = \rank A_E = \rank A - 1$. Then $\dim(\im A^T \cap \ker\pi_E) = 1$. That is, every vector in $\im A^T \cap \ker\pi_E$ is a scalar multiple of any non-zero vector $\nu\in\im A^T \cap \ker\pi_E$.
\end{proposition}
\begin{proof}
The map $\pi_E:(\im A^T + \ker\pi_E)\to\im A_E^T$ gives an isomorphism of the space $(\im A^T + \ker\pi_E)/\ker\pi_E$ with $\im A_E^T$ so that 
\begin{align*}
\dim(\im A^T + \ker\pi_E) &= \dim \im A_E^T + \dim \ker\pi_E\\
&= \rank A_E + n - |E|\\
&= n.
\end{align*}
Thus, we may compute $\dim(\im A^T\cap \ker\pi_E) = \dim \im A^T + \dim \ker\pi_E - n = 1$.
\end{proof}

\begin{corollary}\label{cor:isom}
Let $E\subseteq[n]$ be such that $|E| = \rank A_E = \rank A - 1$ and $\nu\in \im A^T \cap \ker\pi_E$ be a non-zero vector. For $p\in\mathcal{M}_A^{>0}$, the differential $(d\pi_E|_{\mathcal{M}_A^{>0}})_p:T_p\mathcal{M}_A^{>0}\to\mathbb{R}^E$ is an isomorphism if and only if $\nu^Tp \ne 0$.
\end{corollary}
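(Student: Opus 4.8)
The plan is to reduce the isomorphism claim to a statement about the kernel of the restricted differential and then invoke Proposition \ref{prop:lin-alg}. Since $\dim T_p\mathcal{M}_A^{>0} = \rank A - 1 = |E| = \dim\mathbb{R}^E$, the linear map $(d\pi_E|_{\mathcal{M}_A^{>0}})_p$ is an isomorphism if and only if it is injective. It therefore suffices to compute its kernel and show that this kernel is trivial exactly when $\nu^Tp\neq 0$.

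First I would describe a general element of the kernel. A tangent vector $w\in T_p\mathcal{M}_A^{>0}$ has the form $w = (p_1 v_1,\dotsc,p_n v_n)$ for some $v\in\im A^T$ with $v^Tp=0$, and the condition $\pi_E(w)=0$ reads $p_i v_i = 0$ for every $i\in E$. Here the crucial input is that $p\in\mathcal{M}_A^{>0}$ has strictly positive coordinates, so $p_i\neq 0$ forces $v_i = 0$ for $i\in E$, that is, $v\in\ker\pi_E$. Hence $w$ lies in the kernel of the differential precisely when its associated $v$ satisfies both $v\in\im A^T\cap\ker\pi_E$ and $v^Tp=0$. Positivity of $p$ also makes the correspondence $v\mapsto(p_1 v_1,\dotsc,p_n v_n)$ injective, so $w=0$ if and only if $v=0$.

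Next I would apply Proposition \ref{prop:lin-alg}: the space $\im A^T\cap\ker\pi_E$ is one--dimensional and spanned by $\nu$, so any such $v$ is a scalar multiple $v = c\nu$, and the remaining tangency condition $v^Tp=0$ becomes $c\,(\nu^Tp)=0$. If $\nu^Tp\neq 0$, this forces $c=0$, hence $v=0$ and $w=0$, so the kernel is trivial and the differential is an isomorphism. If instead $\nu^Tp=0$, then taking $c=1$ gives $v=\nu\in\im A^T$ with $v^Tp=0$ and $v\in\ker\pi_E$; the associated vector $w=(p_1\nu_1,\dotsc,p_n\nu_n)$ is a nonzero element of $T_p\mathcal{M}_A^{>0}\cap\ker\pi_E$, so the differential fails to be injective.

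The argument is essentially dimension bookkeeping, and I do not anticipate a genuine obstacle. The one point that must be handled carefully is the use of the strict positivity of $p$: it is precisely what converts the vanishing of $\pi_E(w)$ into the membership $v\in\ker\pi_E$, and simultaneously guarantees that a nonzero $v$ produces a nonzero tangent vector $w$. Were $p$ allowed to lie on $\partial\mathcal{M}_A$ rather than in $\mathcal{M}_A^{>0}$, this translation would break down, which is why the statement is restricted to the interior of the model.
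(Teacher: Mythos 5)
Your proof is correct and takes essentially the same approach as the paper's: reduce isomorphism to injectivity via the dimension count $\dim T_p\mathcal{M}_A^{>0} = |E| = \rank A - 1$, identify kernel elements of the differential with vectors $v\in\im A^T\cap\ker\pi_E$ satisfying $v^Tp=0$ (using strict positivity of $p$), and conclude from Proposition~\ref{prop:lin-alg} that the kernel is nontrivial exactly when $\nu^Tp=0$. You merely make explicit two small points the paper leaves implicit, namely the injectivity of $v\mapsto(p_1v_1,\dotsc,p_nv_n)$ and the construction of an explicit nonzero kernel vector when $\nu^Tp=0$.
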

\begin{proof}
Since $\dim T_p\mathcal{M}_A^{>0}$ and $|E|$ are equal to $\rank A - 1$, the differential is an isomorphism if and only if it is injective. We use the description of the tangent space $T_p\mathcal{M}_A^{>0}$ found above,
\begin{align*}
T_p \mathcal{M}_A^{>0} = \{(p_1 v_1,\dotsc,p_n v_n)\in\mathbb{R}^n: v\in\im A^T,~v^Tp = 0\}.
\end{align*}
As $p\in\mathcal{M}_A^{>0}$, a non-zero tangent vector $(p_1 v_1,\dotsc, p_n v_n)\in T_p\mathcal{M}_A^{>0}$ lies in $\ker(d\pi_E|_{\mathcal{M}_A^{>0}})_p$ if and only if $v\in\im A^T \cap \ker \pi_E$ and $v^Tp = 0$. Equivalently, the kernel $\ker(d\pi_E|_{\mathcal{M}_A^{>0}})_p$ is trivial if and only if $\nu^Tp \neq 0$. 
\end{proof}

Corollary \ref{cor:isom} effectively describes the singular locus of $\pi_E$ as the set of points $p\in\mathcal{M}_A^{>0}$ such that $\nu^Tp = 0$ for a non-zero vector $\nu\in\im A^T\cap\ker \pi_E$. We demonstrate that this locus is a proper subset of $\mathcal{M}_A^{>0}$. Recall that the algebraic moment map $\mu_A:\mathcal{M}_A\to P_A$ is a homeomorphism as described in Theorem \ref{thm:alg-moment-map}.

\begin{proposition}\label{prop:branchlocuspolytope}
Let $E\subseteq[n]$ be such that $|E| = \rank A_E = \rank A - 1$ and let $\nu\in\im A^T \cap \ker\pi_E$ be a non-zero vector. A point $p\in\mathcal{M}_A$ satisfies $\nu^Tp = 0$ if and only if $\mu_A(p)\in\im A_E\cap P_A$.
\end{proposition}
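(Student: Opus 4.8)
The plan is to pass from the point $p$ to its moment $\mu(p) = Ap$ and reduce the statement to a question of linear algebra inside $\im A$. Since $\nu \in \im A^T$, I would first write $\nu = A^T w$ for some $w \in \mathbb{R}^k$. Because $\nu \ne 0$, the vector $w$ cannot lie in $\ker A^T = (\im A)^\perp$; equivalently, $\im A$ is not contained in the hyperplane $w^\perp$. On the other hand, the condition $\nu \in \ker\pi_E$ says exactly that $\nu_i = w^T a_i = 0$ for every $i \in E$, so that every column of $A_E$ lies in $w^\perp$, and hence $\im A_E \subseteq w^\perp$.

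Next I would rewrite the branch-locus condition in terms of $w$. For any $p$ one has
\[
\nu^T p = w^T A p = w^T \mu(p),
\]
so $\nu^T p = 0$ if and only if $\mu(p) \in w^\perp$. The crux is then the claim that
\[
\im A \cap w^\perp = \im A_E .
\]
The inclusion $\supseteq$ is exactly the previous paragraph, together with $\im A_E \subseteq \im A$. For the reverse inclusion I would count dimensions: since $w \ne 0$, the hyperplane $w^\perp$ meets $\im A$ in a subspace of dimension either $\rank A$ or $\rank A - 1$, and the first case is excluded because $\im A \not\subseteq w^\perp$; thus $\dim(\im A \cap w^\perp) = \rank A - 1$. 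By hypothesis $\dim \im A_E = \rank A_E = \rank A - 1$, so the inclusion $\im A_E \subseteq \im A \cap w^\perp$ of equidimensional subspaces must be an equality.

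Finally I would assemble the equivalences. For $p \in \mathcal{M}_A$ the moment $\mu(p) = Ap$ always lies in both $\im A$ and $P_A$. Hence $\nu^T p = 0$ if and only if $\mu(p) \in w^\perp$, which by the claim is equivalent to $\mu(p) \in \im A \cap w^\perp = \im A_E$; and since $\mu(p) \in P_A$ holds automatically, this is in turn equivalent to $\mu(p) \in \im A_E \cap P_A$, as desired.

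The only delicate point is the dimension count establishing $\im A \cap w^\perp = \im A_E$: it is precisely here that the standing hypothesis $\rank A_E = \rank A - 1$ is essential, guaranteeing that $\im A_E$ is a genuine hyperplane section of $\im A$ rather than a smaller subspace. Everything else is a formal manipulation of the identity $\nu^T p = w^T \mu(p)$ and the basic fact that $\mu$ lands in $P_A$.
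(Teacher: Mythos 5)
Your proof is correct, and it runs dual to the paper's argument rather than parallel to it. The paper characterizes $\im A_E$ through its annihilator---$q\in\im A_E$ if and only if $v^Tq=0$ for every $v\in\ker A_E^T$---and then transfers this condition to $p$ via $(A^Tv)^Tp=v^T\mu(p)$, using the identity $A^T(\ker A_E^T)=\im A^T\cap\ker\pi_E$ together with the one--dimensionality of that space (Proposition~\ref{prop:lin-alg}) to collapse the quantification over all $v\in\ker A_E^T$ down to the single vector $\nu$. You instead fix one preimage $w$ with $\nu=A^Tw$ and establish the primal identity $\im A\cap w^\perp=\im A_E$: the inclusion $\supseteq$ is immediate from $\nu\in\ker\pi_E$, and the reverse follows from your dimension count, where $\im A\not\subseteq w^\perp$ (forced by $\nu\neq 0$) gives $\dim(\im A\cap w^\perp)=\rank A-1=\rank A_E$. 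The trade--off: your route is self--contained, never invoking Proposition~\ref{prop:lin-alg}, and the dimension count displays exactly where the hypothesis $\rank A_E=\rank A-1$ is consumed; the paper's route keeps its duality step free of any rank assumption and delegates the rank hypothesis entirely to the already--proved Proposition~\ref{prop:lin-alg}. Both arguments pivot on the same transposition identity $\nu^Tp=w^T\mu(p)$ and on the observation that $\mu(p)$ automatically lies in $P_A\subseteq\im A$, so that the polytope constraint is vacuous and the statement reduces to pure linear algebra.
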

\begin{proof}
Note that $q\in\im A_E$ if and only if $v^T q = 0$ for all $v\in\ker A_E^T$---that is, $q$ lies in the span of the columns of $A$ indexed by $E$ exactly when every hyperplane equation which vanishes on the columns of $A$ indexed by $E$ also vanish on $q$. Thus, for $p\in\mathcal{M}_A$, one has $\mu_A(p)\in\im A_E\cap P_A$ if and only if $v^T\mu_A(p) = (A^Tv)^Tp = 0$ for all $v\in\ker A_E^T$. However, the equality $A^T(\ker A_E^T) = \im A^T\cap \ker\pi_E$ holds so that the result follows. 
\end{proof}

The intersection $\im A_E\cap P_A$ is the set of points in $P_A$ spanned by the columns of $A$ indexed by $E$. Since $\rank A_E = \rank A - 1$, this is a proper subset of $P_A$ and the locus of points $p\in\mathcal{M}_A$ such that $\nu^Tp = 0$ is a proper subset of $\mathcal{M}_A$. Thus, the maximal rank of the differential $(d\pi_E|_{\mathcal{M}_A^{>0}})_p$ is $\dim \mathcal{M}_A^{>0} = |E| = \rank A - 1$ and this rank drops exactly on the locus of points in $\mathcal{M}_A^{>0}$ lying on the hyperplane defined by $\nu^Tp = 0$. Combining these results, we've proved the following.

\begin{corollary}\label{cor:branch-locus}
Let $E \subseteq [n]$ be such that $|E| = \rank A_E = \rank A - 1$ and $\nu\in\im A^T\cap \ker \pi_E$ any non-zero vector. The singular locus of the projection $\pi_E|_{\mathcal{M}_A}$ is the set of points
\begin{align*}
B_{A,E} = \{p\in\mathcal{M}_A^{>0}:\nu^Tp = 0\}. 
\end{align*}
\end{corollary}

We note that with this terminology, Proposition~\ref{prop:branchlocuspolytope} states that the image of the algebraic moment map applied to the singular locus $B_{A,E}$ is given by $\mu_A(B_{A,E}) = \im A_E\cap P_A$. That is, the image $\mu_A(B_{A,E})$ is the points of $P_A$ that are spanned by the columns of $A$ indexed by $E$. In addition, the equality $\im A^T\cap \ker\pi_E = A^T(\ker A_E^T)$ gives an effective method of computing a vector $\nu$. This is illustrated in Example \ref{ex:tri2pts}. 

\begin{example}\label{ex:tri2pts}
Let
\begin{align*}
A = \begin{pmatrix}
4 & 0 & 0 & 2 & 1 \\
0 & 4 & 0 & 1 & 2 \\
0 & 0 & 4 & 1 & 1
\end{pmatrix}
\in\mathbb{Z}_{\ge 0}^{3\times 5}
\end{align*}
and $E = \{4,5\}$. The polytope $P_A$ and the image $\mu_A(B_{A,E})$ are illustrated in Figure \ref{fig:tri2pts}. Since $|E| = \rank A_E = \rank A - 1 = 2$, the result of Proposition \ref{prop:branchlocuspolytope} applies. The kernel $\ker A_E^T$ is generated by the vector $\omega = (1,1,-3)$, so we may let $\nu = \frac{1}{4}A^T\omega = (1,1,-3,0,0)\in A^T(\ker A_E^T)$. Thus, the singular locus of the projection $\pi_E:\mathcal{M}_A^{>0}\to\mathbb{R}^E$ is given by the hyperplane section
\begin{align*}
B_{A,E} = \{p\in\mathcal{M}_A^{>0}: p_1+p_2-3p_3 = 0\}.
\end{align*}
\end{example}

\begin{figure}[H]
    \centering
    \begin{tikzpicture}[scale=1]
    \draw[blue,line width = .35mm] (0,1.2)--(2.8,1.2) node [left] at (0,1.2) {$\mu_A(B_{A,E})$};
    \draw (0,0) node {$\bullet$};
    \draw (4,0) node {$\bullet$};
    \draw (0,4) node {$\bullet$};
    \draw (1,1.2) node {$\bullet$};
    \draw (2,1.2) node {$\bullet$};
    \draw (-.2,-.2) node {$1$};
    \draw (4.2,-.2) node {$2$};
    \draw (-.2,4.2) node {$3$};
    \draw (.8,.9) node {$4$};
    \draw (2.2,.9) node {$5$};

    \draw[line width = .35mm] (0,0)--(4,0)--(0,4)--(0,0);
    \end{tikzpicture}
    \caption{The polytope $P_A$ and the image $\mu_A(B_{A,E})$ for Example \ref{ex:tri2pts}.}
    \label{fig:tri2pts}
\end{figure}
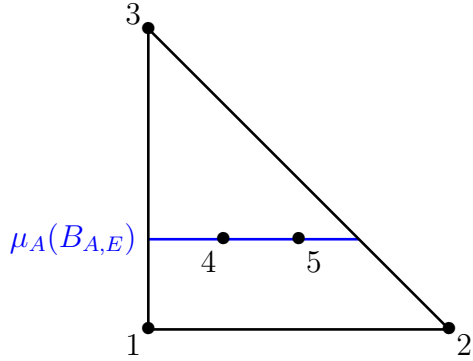

We end by showing that a point $p\in\mathcal{M}_A^{>0}\setminus B_{A,E}$ not in the singular locus maps to the interior of the completable region.

\begin{corollary}\label{cor:dom-image}
Let $E \subseteq [n]$ be such that $|E| = \rank A_E = \rank A - 1$ and let $p\in\mathcal{M}_A^{>0} \setminus B_{A,E}$. There is an open subset $U\subseteq\mathbb{R}^E$ such that $\pi_E(p)\in U$ and $U\subseteq\pi_E(\mathcal{M}_A)$. In particular, the completable image is full-dimensional and $\pi_E(p)\in\interior(\pi_E(\mathcal{M}_A))$ lies in the interior of the completable region.
\end{corollary}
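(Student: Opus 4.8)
The plan is to recognize this as a direct application of the Inverse Function Theorem, leveraging the manifold structure and the rank computation already established. First I would record the two structural facts assembled just before the statement: $\mathcal{M}_A^{>0}$ is a smooth manifold of dimension $\rank A - 1$, and the coordinate projection restricts to a smooth map $\pi_E|_{\mathcal{M}_A^{>0}}:\mathcal{M}_A^{>0}\to\mathbb{R}^E$ between spaces of equal dimension $|E| = \rank A - 1$. This equality of dimensions is exactly what makes the Inverse Function Theorem available.

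Next I would invoke Corollary \ref{cor:isom}. Since $p\in\mathcal{M}_A^{>0}\setminus B_{A,E}$, by the definition of the branch locus we have $\nu^Tp\ne 0$, and hence the differential $(d\pi_E|_{\mathcal{M}_A^{>0}})_p:T_p\mathcal{M}_A^{>0}\to\mathbb{R}^E$ is an isomorphism. With the differential an isomorphism at $p$ between manifolds of equal dimension, the Inverse Function Theorem furnishes an open neighborhood $V\subseteq\mathcal{M}_A^{>0}$ of $p$ and an open neighborhood $U\subseteq\mathbb{R}^E$ of $\pi_E(p)$ such that $\pi_E|_V:V\to U$ is a diffeomorphism; in particular $\pi_E(V) = U$ is open in $\mathbb{R}^E$.

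Finally, since $V\subseteq\mathcal{M}_A^{>0}\subseteq\mathcal{M}_A$, the inclusion $U = \pi_E(V)\subseteq\pi_E(\mathcal{M}_A)$ holds, so $U$ is the desired open set containing $\pi_E(p)$ and contained in the completable region. Because $U$ is a nonempty open subset of $\mathbb{R}^E$ contained in $\pi_E(\mathcal{M}_A)$, the completable region is full--dimensional, and the containment $\pi_E(p)\in U\subseteq\pi_E(\mathcal{M}_A)$ exhibits $\pi_E(p)$ as an interior point of $\pi_E(\mathcal{M}_A)$.

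I would expect no serious obstacle here, as the substantive content has already been front--loaded into the tangent--space computations and Corollary \ref{cor:isom}. The only points requiring care are formal: confirming that the Inverse Function Theorem is applied to a smooth map of \emph{manifolds} rather than of open subsets of Euclidean space, which is licensed by the transversality argument establishing that $\mathcal{M}_A^{>0}$ is a smooth manifold of dimension $\rank A - 1$; and recording the inclusion $\mathcal{M}_A^{>0}\subseteq\mathcal{M}_A$ so that the open neighborhood lands inside the full completable region, yielding the conclusion that $\pi_E(p)$ lies in its interior.
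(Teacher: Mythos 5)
Your proposal is correct and takes essentially the same route as the paper: both use Corollary \ref{cor:isom} to conclude the differential $(d\pi_E|_{\mathcal{M}_A^{>0}})_p$ is an isomorphism off the branch locus, and then apply the inverse/implicit function theorem to obtain a local diffeomorphism onto an open neighborhood of $\pi_E(p)$ inside $\pi_E(\mathcal{M}_A)$. The only difference is cosmetic (you cite the Inverse Function Theorem where the paper says implicit function theorem, and you spell out the final inclusions in more detail), so there is nothing to add.
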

\begin{proof}
For any point $p\in\mathcal{M}_A^{>0}\setminus B_{A,E}$, the differential $(d\pi_E|_{\mathcal{M}_A^{>0}})_p:T_p\mathcal{M}_A^{>0}\to\mathbb{R}^E$ is an isomorphism. By the implicit function theorem, $\pi_E$ restricts to a diffeomorphism of an open neighborhood of $p\in \mathcal{M}_A^{>0}$ to an open neighborhood of $\pi_E(p)$. 
\end{proof}

\subsection{The Interior and Boundary of the Completable Region}\label{sec:interior-completable-region}
Corollary \ref{cor:dom-image} shows that for $p\in\mathcal{M}_A^{>0}\setminus B_{A,E}$, the image $\pi_E(p)$ lies in the interior of the completable region $\pi_E(\mathcal{M}_A)$. We will show that the boundary of the completable region consists of the image of the boundary of the model $\pi_E(\partial\mathcal{M}_A)$ and the image of the singular locus $\pi_E(B_{A,E})$, which completely classifies the interior and the boundary of the completable region. We accomplish this by better understanding the completions of a partial observation to the non-negative toric variety $X_A^{\ge 0}$. 

\begin{proposition}\label{prop:exponential-trick}
Let $E\subseteq[n]$ be such that $|E| = \rank A_E = \rank A - 1$ and let $\nu\in\im A^T\cap \ker\pi_E$ be a non-zero vector. If $p_E\in\mathbb{R}_{>0}^E$ is a partial observation with non-zero coordinates, then for any completions $p,q\in X_A^{\ge 0}$ of $p_E$, there exists $\alpha\in\mathbb{R}$ such that 
\begin{align*}
p_i = q_i e^{\alpha \nu_i}~~\text{ for all }~i\in \supp(p)\cap\supp(q).
\end{align*}
\end{proposition}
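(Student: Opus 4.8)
The plan is to reduce the statement to the one-dimensional linear-algebra fact recorded in Proposition \ref{prop:lin-alg}, namely that $\im A^T\cap\ker\pi_E$ is spanned by $\nu$. Write $S=\supp(p)\cap\supp(q)$ for the common support of the two completions. Since $p_E$ has only non-zero coordinates and $\pi_E(p)=\pi_E(q)=p_E$, every index $i\in E$ satisfies $p_i=q_i\neq 0$; in particular $E\subseteq S$ and $p$ and $q$ agree on $E$. On $S$ both points are strictly positive, so the numbers $\log(p_i)$ and $\log(q_i)$ are defined for $i\in S$, and it suffices to produce a scalar $\alpha\in\mathbb{R}$ with $\log(p_i)-\log(q_i)=\alpha\nu_i$ for all $i\in S$; exponentiating then yields the claim.

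First I would show that the difference vector $w\in\mathbb{R}^S$ given by $w_i=\log(p_i)-\log(q_i)$ lies in $\im A_S^T$. Because $p\in X_A$ and $I_{A_S}$ is the elimination ideal of $I_A$, the projection $\pi_S(p)$ lies in $X_{A_S}$, and by construction $\pi_S(p)$ has all non-zero coordinates. Repeating the logarithm argument from the proof of Proposition \ref{image}: for every integer vector $u$ with $A_S u=0$ the corresponding binomial relation forces $\pi_S(p)^{u}=1$, hence $u^T\log(\pi_S(p))=0$, so $\log(\pi_S(p))$ annihilates $\ker A_S$ and therefore lies in $\im A_S^T$. The same holds for $q$, so $w=\log(\pi_S(p))-\log(\pi_S(q))\in\im A_S^T$. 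Moreover $w_i=0$ for every $i\in E$, since $p$ and $q$ agree there.

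The final step is a lifting argument. Choose $v\in\mathbb{R}^k$ with $A_S^T v=w$ and consider the full vector $A^T v\in\im A^T\subseteq\mathbb{R}^n$. Its $i$-th coordinate equals $a_i^T v=w_i$ for every $i\in S$, and in particular $(A^T v)_i=w_i=0$ for $i\in E\subseteq S$, so $A^T v\in\ker\pi_E$. Hence $A^T v\in\im A^T\cap\ker\pi_E$, which by Proposition \ref{prop:lin-alg} is spanned by $\nu$, so $A^T v=\alpha\nu$ for some $\alpha\in\mathbb{R}$. Restricting to $S$ gives $w_i=(A^T v)_i=\alpha\nu_i$, that is, $\log(p_i)-\log(q_i)=\alpha\nu_i$ for all $i\in\supp(p)\cap\supp(q)$, which is the desired identity after exponentiation.

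I expect the only delicate point to be this lifting step: the vector $A^T v$ need not vanish, nor even be controlled, on the coordinates outside $S$, so it is not literally the zero-extension of $w$ to $\mathbb{R}^n$. The observation that rescues the argument is that membership in $\ker\pi_E$ constrains only the coordinates indexed by $E$, and since $E\subseteq S$ the lift $A^T v$ automatically inherits the vanishing $w|_E=0$ there; this is precisely what allows Proposition \ref{prop:lin-alg} to apply and pin $A^T v$ to a multiple of $\nu$. Everything else—the logarithm trick and the fact that projecting a toric point lands in the smaller toric variety—is already available from Proposition \ref{image} and the elimination-ideal description preceding Corollary \ref{cor:imagecompletion}.
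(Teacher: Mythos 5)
Your proof is correct, and it takes a genuinely different route from the paper's. The paper parametrizes both completions, writing $p = \chi_{F_1}\varphi^A(\theta_1)$ and $q = \chi_{F_2}\varphi^A(\theta_2)$ with $\theta_1,\theta_2\in\mathbb{R}_{>0}^k$ --- an input that rests on the Geiger--Meek--Sturmfels description of the non--negative toric variety (Remark \ref{rem:nonnegative}): a point of $X_A^{\ge 0}$ restricted to its support comes from a strictly positive parameter vector. It then takes logarithms in \emph{parameter} space, getting $\log\theta_1-\log\theta_2\in\ker A_E^T$, and pushes forward by $A^T$, using $A^T(\ker A_E^T)=\im A^T\cap\ker\pi_E=\spanvec\{\nu\}$. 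You instead stay in \emph{coordinate} space: the binomial generators of the toric ideal (Proposition \ref{ToricIdeal}, applied on the common support $S$) show that the log--difference $w$ lies in $\im A_S^T$, and then your lifting step produces $A^Tv\in\im A^T\cap\ker\pi_E$ and invokes Proposition \ref{prop:lin-alg} directly. The delicate point you flag --- that $A^Tv$ is not the zero--extension of $w$, but that $\ker\pi_E$ only constrains the coordinates in $E\subseteq S$, where $A^Tv$ agrees with $w|_E=0$ --- is exactly right and is handled correctly. What your route buys: it needs no positive--parametrization result at all, only the binomial equations and linear algebra, so it is self--contained given Propositions \ref{ToricIdeal} and \ref{prop:lin-alg}. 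What the paper's route buys: brevity, since once the parametrization is granted the identity on $F_1\cap F_2$ falls out by exponentiating, with no lifting needed. One cosmetic remark: your binomial argument literally gives $u^Tw=0$ only for \emph{integer} $u\in\ker A_S$; to conclude $w\in(\ker A_S)^\perp=\im A_S^T$ one should note that the integer kernel vectors span the real kernel (true since $A_S$ is an integer matrix) --- the same gloss the paper itself makes in the proof of Proposition \ref{image}, so this is not a gap relative to the paper's own standard of rigor.
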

\begin{proof}
We note that for any point $p\in X_A$, by projecting the coordinates indexed by $\supp(p)$, there is a completion to $\varphi^A(\mathbb{R}_{>0}^k)$ which agrees with $p$ on its support. Thus, we may write $p = \chi_{\supp(p)}\varphi^A(\theta)$ for some $\theta\in\mathbb{R}_{>0}^k$. Thus, we write $p = \chi_{F_1}\varphi^A(\theta_1)$ and $q = \chi_{F_2}\varphi^A(\theta_2)$ where $F_1=\supp(p)$ and $F_2=\supp(q)$ are facial sets that contain $E$, and $\theta_1,\theta_2\in\mathbb{R}_{>0}^k$. Then by taking the coordinate-wise logarithm, we have
\begin{align*}
0 = \log(p_E) - \log(q_E) = A_E^T(\log(\theta_1) - \log(\theta_2)). 
\end{align*}
Therefore $\log(\theta_1) - \log(\theta_2)\in\ker A_E^T$ so that $\log(p) - \log(q) = \alpha\nu \in A^T(\ker A_E^T)$ for some $\alpha\in\mathbb{R}$. Exponentiating, we have that for $\varphi^A(\theta_1) = \varphi^A(\theta_2)e^{\alpha \nu}$. Then for $i\in F_1\cap F_2$, we have that $p_i = q_i e^{\alpha\nu_i}$.
\end{proof}

We now leverage this proposition to show that the boundary of the completable region $\partial \pi_E(\mathcal{M}_A)$ is given by the image of the boundary of the model $\pi_E(\partial \mathcal{M}_A)$ and the image of the singular locus $\pi_E(B_{A,E})$. Equivalently, we show no point of the boundary $\partial\mathcal{M}_A$ or the singular locus $B_{A,E}$ map into the interior $\interior(\pi_E(\mathcal{M}_A))$. We start by showing a point $p\in\partial\mathcal{M}_A$ is such that either $\pi_E(p)$ has some coordinate equal to zero or for any $\epsilon > 0$, $(1+\epsilon)\pi_E(p)$ is not completable the log-linear model $\mathcal{M}_A$.

\begin{lemma}\label{lem:boundary-image}
Let $E\subseteq[n]$ be such that $|E| = \rank A_E = \rank A - 1$. The boundary of the model $\partial\mathcal{M}_A$ maps by $\pi_E$ into the boundary of the completable region $\partial\pi_E(\mathcal{M}_A)$. That is, there is an inclusion $\pi_E(\partial\mathcal{M}_A)\subseteq\partial\pi_E(\mathcal{M}_A)$. 
\end{lemma}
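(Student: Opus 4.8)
The plan is to take a point $p\in\partial\mathcal{M}_A$ and show that $\pi_E(p)$ cannot lie in the interior of the completable region $C:=\pi_E(\mathcal{M}_A)$. By Theorem~\ref{thm:alg-moment-map} and Lemma~\ref{lem:boundarypolytope}, $p$ has a zero coordinate and $F:=\supp(p)$ is a proper facial set of $A$. I would split into two cases according to whether $E\subseteq F$. If some $i\in E$ has $p_i=0$, then $\pi_E(p)$ lies on the boundary of the nonnegative orthant $\mathbb{R}_{\ge0}^E$; since $C\subseteq\mathbb{R}_{\ge0}^E$, every neighborhood of $\pi_E(p)$ meets the complement of $C$, so $\pi_E(p)\in\partial C$. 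The substance of the lemma is the remaining case $E\subseteq F$, where $p_E:=\pi_E(p)\in\mathbb{R}_{>0}^E$.

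The first key step I would establish is that $F$ is forced to be a facet and equals the minimal facial set $F_{\min}$ containing $E$. Since $\rank A_E=\rank A-1=\dim P_A$, the columns $\{a_i:i\in E\}$ span a linear subspace of dimension $\dim P_A$, so the smallest face of $P_A$ containing them is either a facet or all of $P_A$; as $p$ is a boundary point its support is proper, forcing $F=F_{\min}$ to be a facet. Consequently, by Corollary~\ref{cor:polytopefaces}, every completion $q\in X_A^{\ge0}$ of $p_E$ has support $G$ a facial set with $F=F_{\min}\subseteq G$.

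I would then prove that $p$ has the least total mass among all completions of $p_E$ to $X_A^{\ge0}$, that is $\sum_i q_i\ge\sum_i p_i=1$ for every such $q$. Choosing an inner normal $v$ of the facet $F$, the vector $A^Tv$ is a nonzero element of $\im A^T\cap\ker\pi_E$, hence a scalar multiple of the branch direction $\nu$ of Proposition~\ref{prop:lin-alg}; in particular $\nu_i=0$ for $i\in F$ and $\nu_i\ne0$ for $i\notin F$. Applying Proposition~\ref{prop:exponential-trick} to $p$ and a completion $q$ with support $G\supseteq F$ yields $\alpha\in\mathbb{R}$ with $p_i=q_ie^{\alpha\nu_i}$ on $F\cap G=F$; since $\nu$ vanishes on $F$ this gives $q_i=p_i$ for $i\in F$, whence $\sum_i q_i\ge\sum_{i\in F}q_i=\sum_{i\in F}p_i=1$. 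Thus the minimal completion mass is exactly $1$.

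Finally I would exploit homogeneity. Because the column sums of $A$ are equal, $I_A$ is homogeneous and $X_A^{\ge0}$ is a cone, so if $\lambda p_E$ admitted a completion $q\in\mathcal{M}_A$ for some $\lambda>1$, then $\tfrac1\lambda q\in X_A^{\ge0}$ would be a completion of $p_E$ of mass $\tfrac1\lambda<1$, contradicting the previous step. Hence $\lambda p_E$ is not completable for any $\lambda>1$, and since $\lambda p_E\to p_E$ as $\lambda\to1^+$, the completable point $p_E$ is a limit of noncompletable points, giving $p_E\in\partial C$. The main obstacle is the mass-minimality step: showing that no completion of $p_E$ is lighter than the boundary completion $p$. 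The facet structure of $F_{\min}$ is what makes this work, since it forces every competing completion to contain $F$ in its support and to agree with $p$ there, after which the exponential trick and homogeneity close the argument.
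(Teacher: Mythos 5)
Your proposal is correct and follows essentially the same route as the paper's proof: the same case split on whether $E\subseteq\supp(p)$, the same argument that $\supp(p)$ is then the facet equal to the minimal facial set containing $E$, the same choice $\nu = A^T\omega$ for an inner normal $\omega\in\ker A_E^T$, and the same use of Proposition~\ref{prop:exponential-trick} to force completions to agree with $p$ on $\supp(p)$ and hence have total mass at least $1$. The only difference is cosmetic: the paper applies the exponential trick directly to completions of the scaled observation $(1+\epsilon)p_E$, while you establish mass-minimality of $p$ among completions of $p_E$ and then invoke the cone property of $X_A^{\ge 0}$ to rule out completions of $\lambda p_E$ for $\lambda>1$ --- the same homogeneity the paper uses implicitly when asserting $(1+\epsilon)p\in X_A^{\ge 0}$.
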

\begin{proof}
Let $p\in\mathcal{M}_A$ and $p_E = \pi_E(p)$. If $p\in\partial\mathcal{M}_A$, then some coordinate of $p$ is equal to zero and $\supp(p)\subseteq[n]$ is a proper facial set. There are two cases to consider. If $E$ is not contained in $\supp(p)$, then $p_E\in\mathbb{R}^E$ contains some zero coordinate and any open ball around $p_E$ contains points with negative coordinates. Those points with negative coordinates have no completion to $\mathcal{M}_A$ so that $p_E \in\partial\pi_E(\mathcal{M}_A)$. 

Assume that $E\subseteq \supp(p)$. Since $\rank A_E = \rank A - 1$ and $\supp(p)$ is a proper facial set, it follows that $\supp(p)$ is a facet and $\supp(p)$ is the smallest facial set containing $E$. Without loss of generality, we may let $\omega\in\ker A_E^T$ be an inner normal vector of the facet $\supp(p)$ so that $\nu = A^T\omega$ satisfies $\nu_i = 0$ for all $i\in\supp(p)$ and $\nu_i > 0$ for $i\in[n]\setminus\supp(p)$. 

Let $\epsilon>0$ and $q_E = (1+\epsilon)p_E\in\mathbb{R}^E$. Then $(1+\epsilon)p\in X_A^{\ge 0}$ is a completion of $q_E$ and by Proposition \ref{prop:exponential-trick}, any other completion $q\in X_A^{\ge 0}$ of $q_E$ satisfies
\begin{align*}
q_i = (1+\epsilon)p_i e^{\alpha \nu_i}
\end{align*}
for all $i\in\supp(p)\cap\supp(q)$. Since $E\subseteq\supp(q)$ and $\supp(p)$ is the smallest facial set containing $E$, we have that $\supp(p)\subseteq\supp(q)$. Further, since $\nu_i = 0$ for all $i\in \supp(p)$ and $q_i = (1+\epsilon)p_i$ for $i\in \supp(p)$. We compute

\begin{align*}
\sum_i q_i \ge \sum_{i\in\supp(p)} q_i = (1+\epsilon)\sum_{i\in\supp(p)} p_i = 1+\epsilon > 1.
\end{align*}
Any open set around $p_E$ then contains points which are not completable to $\mathcal{M}_A$ so that $p_E\in\partial\pi_E(\mathcal{M}_A)$. 
\end{proof}

Lemma \ref{lem:boundary-image} extends to the singular locus $B_{A,E}$ as well. We show that if $p\in B_{A,E}$, then the coordinates of $\pi_E(p)$ cannot be increased while remaining completable to the log-linear model $\mathcal{M}_A$. Thus, the image $\pi_E(B_{A,E})$ is contained in the boundary of the completable region as well.

\begin{lemma}\label{lem:branch-image}
Let $E\subseteq[n]$ be such that $|E| = \rank A_E = \rank A - 1$. The singular locus $B_{A,E}$ maps by $\pi_E$ into the boundary of the completable region $\partial\pi_E(\mathcal{M}_A)$. That is, there is an inclusion $\pi_E(B_{A,E})\subseteq\partial\pi_E(\mathcal{M}_A)$.  
\end{lemma}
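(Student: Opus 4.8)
The plan is to mirror the structure of the proof of Lemma~\ref{lem:boundary-image}: given $p\in B_{A,E}$ with $p_E=\pi_E(p)$, I will show that the rescaled partial observations $(1+\epsilon)p_E$ fail to be completable to $\mathcal{M}_A$ for every $\epsilon>0$, so that $p_E$ is a limit of non--completable points and hence lies in $\partial\pi_E(\mathcal{M}_A)$. Since $p\in\mathcal{M}_A^{>0}$ has positive coordinates, $p_E\in\mathbb{R}_{>0}^E$, and the key leverage is again Proposition~\ref{prop:exponential-trick}, which forces any two full--support completions of a fixed positive partial observation to differ by the coordinate--wise factor $e^{\beta\nu}$.

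First I would record the one--variable convexity fact that drives everything. Writing $h(\beta)=\sum_i p_i e^{\beta\nu_i}$, one has $h(0)=\sum_i p_i=1$ since $p\in\Delta_{n-1}$, and $h'(0)=\sum_i p_i\nu_i=\nu^Tp=0$ since $p$ lies in the branch locus. As $p_i>0$ for all $i$ and $\nu\ne 0$, the second derivative $h''(\beta)=\sum_i p_i\nu_i^2 e^{\beta\nu_i}$ is strictly positive, so $h$ is strictly convex and attains its unique global minimum $h(0)=1$ at $\beta=0$; thus $h(\beta)\ge 1$ with equality only at $\beta=0$.

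Next I must rule out completions lying on the boundary of the model, which is where the branch hypothesis is used a second time and which I expect to be the main obstacle. This reduces to the preliminary claim that $[n]$ is the smallest facial set containing $E$. Suppose not; by the rank bound $\rank A_E=\rank A-1$, the smallest facial set $F_0$ containing $E$ would be a proper facial set with $\rank A_{F_0}=\rank A-1$ (its affine dimension is at most $\dim P_A-1$), whence $\ker A_{F_0}^T=\ker A_E^T$ by comparing dimensions. Choosing the facet inner normal $\omega_0\in\ker A_{F_0}^T=\ker A_E^T$ and setting $\nu'=A^T\omega_0$, the identity $A^T(\ker A_E^T)=\im A^T\cap\ker\pi_E$ together with $\dim(\im A^T\cap\ker\pi_E)=1$ from Proposition~\ref{prop:lin-alg} shows $\nu'$ is a scalar multiple of $\nu$, while $\nu'_i=\omega_0^Ta_i$ vanishes on $F_0$ and is strictly positive off $F_0$. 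Since $p$ has full support and $F_0\ne[n]$, this gives $\nu'^Tp>0$, contradicting $\nu^Tp=0$. Hence $F_0=[n]$, and by Corollary~\ref{cor:polytopefaces} every completion of the positive partial observation $(1+\epsilon)p_E$ must have full support.

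Finally I would assemble these pieces. Fix $\epsilon>0$ and suppose $q\in\mathcal{M}_A$ completes $(1+\epsilon)p_E$; by the previous step $q$ has full support, and $(1+\epsilon)p\in X_A^{>0}$ is another full--support completion to $X_A^{\ge 0}$ of the same partial observation, so Proposition~\ref{prop:exponential-trick} yields $\beta\in\mathbb{R}$ with $q_i=(1+\epsilon)p_i e^{\beta\nu_i}$ for all $i$. Summing and using $q\in\Delta_{n-1}$ gives $1=\sum_i q_i=(1+\epsilon)h(\beta)\ge 1+\epsilon>1$, a contradiction. Therefore $(1+\epsilon)p_E$ is not completable for any $\epsilon>0$, and letting $\epsilon\to 0^+$ shows $p_E\in\partial\pi_E(\mathcal{M}_A)$. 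The reason the boundary--completion step is essential is that, without first forcing $\supp(q)=[n]$, a partial--support completion would only give $\sum_i q_i=(1+\epsilon)\sum_{i\in\supp(q)}p_i e^{\beta\nu_i}$, a partial sum over which the convexity estimate gives no control; it is precisely the branch condition $\nu^Tp=0$ that rules this out.
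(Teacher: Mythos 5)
Your proof is correct and follows essentially the same route as the paper: reduce to the case where every completion of $(1+\epsilon)p_E$ has full support, then combine Proposition~\ref{prop:exponential-trick} with the inequality $\sum_i p_i e^{\beta\nu_i}\ge 1$ forced by $\nu^Tp=0$ (you use strict convexity, the paper uses $e^x\ge 1+x$) to show any such completion has coordinate sum at least $1+\epsilon>1$. The only difference is in the reduction step: where the paper cites Proposition~\ref{prop:branchlocuspolytope} to dismiss the case that $E$ lies in a proper facial set (since then $B_{A,E}=\emptyset$), you prove the equivalent fact directly via a facet inner normal, the identity $A^T(\ker A_E^T)=\im A^T\cap\ker\pi_E$, and Proposition~\ref{prop:lin-alg}, which is a valid self-contained substitute.
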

\begin{proof}
We may assume $E$ is not contained in a proper facial set, since otherwise $B_{A,E}$ is empty as can be seen by Proposition \ref{prop:branchlocuspolytope}. Thus, by Corollary \ref{cor:polytopefaces}, if $p_E\in\mathbb{R}^E$ has non-zero coordinates, any completion $p\in X_A^{\ge 0}$ has non-zero coordinates. Let $p\in B_{A,E}$ and $p_E = \pi_E(p)$. If $\epsilon>0$ and $q_E = (1+\epsilon)p_E$ as above, then $(1+\epsilon)p\in X_A^{\ge 0}$ is a completion. For any other completion $q\in X_A^{\ge 0}$, there exists $\alpha\in\mathbb{R}$ such that $q_i = (1+\epsilon)p_i e^{\alpha \nu_i}$ for all $i\in[n]$ since $p$ and $q$ have all non-zero coordinates. Since $\nu^Tp = 0$, one computes
\begin{align*}
\sum_i q_i = (1+\epsilon)\sum_i p_i e^{\alpha\nu_i} \ge (1+\epsilon)\sum_i p_i(1+\alpha\nu_i) = 1+\epsilon>1.
\end{align*}
Again one finds that any open set around $p_E$ contains points which are not completable to $\mathcal{M}_A$ so that $p_E\in\partial \pi_E(\mathcal{M}_A)$. 
\end{proof}

We combine Lemma \ref{lem:boundary-image} and Lemma \ref{lem:branch-image} to describe the boundary and the interior of the completable region.

\begin{theorem}\label{thm:boundary-interior-image}
Let $E\subseteq[n]$ be such that $|E| = \rank A_E = \rank A - 1$. The boundary of the completable region is equal to the image by $\pi_E$ of the union of the boundary of the model $\partial\mathcal{M}_A$ and the singular locus $B_{A,E}$. Precisely,
\begin{align*}
\partial \pi_E(\mathcal{M}_A) = \pi_E(\partial\mathcal{M}_A) \cup \pi_E(B_{A,E}). 
\end{align*}
Equivalently, the interior of the completable region is equal to the image
\begin{align*}
\interior(\pi_E(\mathcal{M}_A)) = \pi_E(\mathcal{M}_A^{>0}\setminus B_{A,E}). 
\end{align*}
\end{theorem}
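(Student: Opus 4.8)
The plan is to deduce the theorem formally from the three inclusions already established, using only the fact that the completable region is closed. First I would record that $\mathcal{M}_A$ is compact: it is a closed and bounded subset of $\mathbb{R}^n$ (equivalently, it is homeomorphic to the compact polytope $P_A$ by Theorem \ref{thm:alg-moment-map}). Hence its continuous image $\pi_E(\mathcal{M}_A)$ is compact, and in particular closed in $\mathbb{R}^E$. For a closed set the interior and the topological boundary are disjoint and together recover the whole set, so
\begin{align*}
\pi_E(\mathcal{M}_A) = \interior(\pi_E(\mathcal{M}_A)) \sqcup \partial\pi_E(\mathcal{M}_A).
\end{align*}

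Next I would decompose the source. Since $\mathcal{M}_A^{>0}$ is the interior of the model and $\partial\mathcal{M}_A$ its boundary, and since $B_{A,E}\subseteq\mathcal{M}_A^{>0}$, there is a disjoint decomposition
\begin{align*}
\mathcal{M}_A = \bigl(\mathcal{M}_A^{>0}\setminus B_{A,E}\bigr) \sqcup B_{A,E} \sqcup \partial\mathcal{M}_A.
\end{align*}
Applying $\pi_E$ writes the completable region as the union of the three images. Now I invoke the inclusions proved above: Corollary \ref{cor:dom-image} gives $\pi_E(\mathcal{M}_A^{>0}\setminus B_{A,E})\subseteq\interior(\pi_E(\mathcal{M}_A))$, while Lemma \ref{lem:boundary-image} and Lemma \ref{lem:branch-image} together give $\pi_E(\partial\mathcal{M}_A)\cup\pi_E(B_{A,E})\subseteq\partial\pi_E(\mathcal{M}_A)$.

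Finally I would conclude by intersecting. Because the interior and the boundary are disjoint, intersecting the partition of the first paragraph with the three-piece decomposition of the second kills the two pieces that land in the boundary, forcing $\interior(\pi_E(\mathcal{M}_A))\subseteq\pi_E(\mathcal{M}_A^{>0}\setminus B_{A,E})$; combined with Corollary \ref{cor:dom-image} this yields the second displayed equality of the theorem. Symmetrically, intersecting with $\partial\pi_E(\mathcal{M}_A)$ kills the piece that lands in the interior and yields $\partial\pi_E(\mathcal{M}_A)\subseteq\pi_E(\partial\mathcal{M}_A)\cup\pi_E(B_{A,E})$, giving the first equality; the two statements are equivalent precisely because of the partition recorded at the outset. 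The only real content beyond the cited results is the compactness observation that makes $\pi_E(\mathcal{M}_A)$ closed; there is no further computation, and the anticipated obstacle is merely recognizing that the theorem is a formal consequence of the work already done rather than something requiring new geometric input.
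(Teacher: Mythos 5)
Your proposal is correct and follows essentially the same route as the paper's proof: Corollary \ref{cor:dom-image} places $\pi_E(\mathcal{M}_A^{>0}\setminus B_{A,E})$ in the interior, Lemmas \ref{lem:boundary-image} and \ref{lem:branch-image} place the other two pieces in the boundary, and the set-theoretic decomposition of $\mathcal{M}_A$ yields both equalities. The only difference is that you make explicit the compactness of $\mathcal{M}_A$ (hence closedness of $\pi_E(\mathcal{M}_A)$), which the paper leaves implicit in its phrase ``by taking the relative complement''---a worthwhile clarification, but not a different argument.
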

\begin{proof}
For $p\in\mathcal{M}_A^{>0}\setminus B_{A,E}$, the differential $(d\pi_E)_p$ is an isomorphism so that $\pi(p)\in\interior(\pi(\mathcal{M}_A))$. By taking the relative complement with $\pi_E(\mathcal{M}_A)$, the inclusion $\partial\pi_E(\mathcal{M}_A)\subseteq \pi_E(\partial\mathcal{M}_A) \cup \pi_E(B_{A,E})$ follows. The reverse inclusion is an immediate consequence of Lemma \ref{lem:boundary-image} and Lemma \ref{lem:branch-image}.
\end{proof}

We illustrate Theorem \ref{thm:boundary-interior-image} in Example \ref{ex:square-part1}.

\begin{example}\label{ex:square-part1}
Let
\begin{align*}
A = \begin{pmatrix}
2 & 1 & 1 & 0 \\
0 & 1 & 0 & 1 \\
0 & 0 & 1 & 1
\end{pmatrix}\in\mathbb{Z}_{\ge 0}^{3\times 4}
\end{align*}
and $E = \{1,4\}$. The polytope $P_A$, the image $\mu_A(B_{A,E})$, and the completable region $\pi_E(\mathcal{M}_A)$ are illustrated in Figure \ref{fig:not-blob}. Using coordinates $x,y,z,w$ for $\mathbb{R}^4$, the log-linear model is given by
\begin{align*}
\mathcal{M}_A = \{(x,y,z,w)\in\mathbb{R}_{\ge 0}^4: xw - yz = x+y+z+w-1 = 0\}. 
\end{align*}
As $(0,1,-1)\in\ker A_E^T$, we may take $\nu = (0,1,-1,0)$ so that 
\begin{align*}
B_{A,E} = \{(x,y,z,w)\in\mathcal{M}_A^{>0}: y - z = 0\}.
\end{align*}
As the subset $E$ is not contained in a proper facial set, the boundary $\partial\mathcal{M}_A$, of points where some coordinate is equal to zero, maps to the coordinate axes of $\mathbb{R}_{\ge 0}^E$. Contrary to this, the singular locus $B_{A,E}$ maps to the curved boundary of Figure \ref{fig:not-blob}. The defining equation for this this curve can be obtained via elimination. Indeed, the defining ideal of the singular locus is the prime ideal
\begin{align*}
I(B_{A,E}) = \langle xw-yz,x+y+z+w-1,y - z\rangle.
\end{align*}
Using \texttt{Macaulay2} \cite{M2} to eliminate the variables $y$ and $z$, we find that the eliminant of $I(B_{A,E})$ is generated by the single polynomial
\begin{align*}
f(x,w) = x^2 - 2xw - w^2 - 2x - 2w + 1.
\end{align*}
Thus, $f(x,w) = 0$ is the defining equation of the Zariski closure of the projection $\overline{\pi_E(B_{A,E})}$.

We note that the image $\pi_E(B_{A,E})$ is a semialgebraic set and the zero set $\{(x,w)\in\mathbb{R}^2:f(x,w) = 0\}$ is its Zariski closure. In general, it is difficult to obtain a semialgebraic description of the image $\pi_E(B_{A,E})$ or of the completable region $\pi_E(\mathcal{M}_A)$. 
\end{example}

\begin{figure}[H]
    \centering
        \begin{minipage}{.45\textwidth}
            \centering
            \begin{tikzpicture}[scale=1.25]
            \draw[blue,line width = .35mm] (0,0)--(3,3) node [above left] at (2,1.8) {$\mu_A(B_{A,E})$};
            \draw (0,0) node {$\bullet$};
            \draw (3,0) node {$\bullet$};
            \draw (0,3) node {$\bullet$};
            \draw (3,3) node {$\bullet$};
            \draw (-.2,-.2) node {$1$};
            \draw (3.2,-.2) node {$2$};
            \draw (-.2,3.2) node {$3$};
            \draw (3.2,3.2) node {$4$};

            \draw[line width = .35mm] (0,0)--(3,0)--(3,3)--(0,3)--(0,0);
            \end{tikzpicture}
            
        \end{minipage}
        \begin{minipage}{.45\textwidth}
            \centering
            \includegraphics[scale = 0.4]{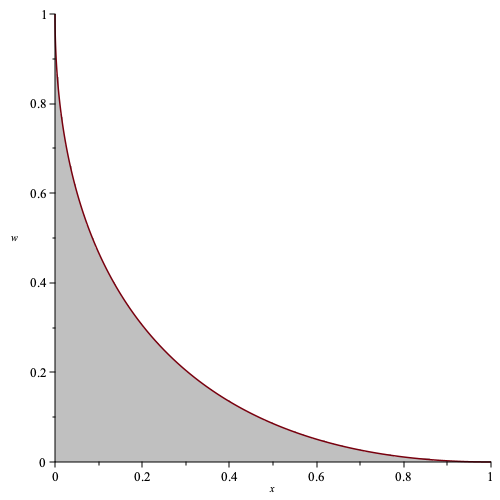}
        \end{minipage}
        \centering
        \caption{(Left) The polytope $P_A$ and image $\mu_A(B_{A,E})$ for Example \ref{ex:square-part1} \\ (Right) The completable region $\pi_E(\mathcal{M}_A)$ for Example \ref{ex:square-part1}}
    \label{fig:not-blob}
\end{figure}

\subsection{Enumerating Completions to the Log-Linear Model}\label{sec:enumerating-completions}
Given a matrix $A\in\mathbb{Z}_{\ge 0}^{k\times n}$ and a subset $E\subseteq[n]$ such that $|E| = \rank A_E = \rank A - 1$, we discuss the possible number of completions of a partial observation with non-zero coordinates in the completable region $p_E\in\pi_E(\mathcal{M}_A)$ to the log-linear model $\mathcal{M}_A$. By elementary methods, we conclude that such a partial observation has either 0, 1, or 2 completions to $\mathcal{M}_A$ depending on whether the subset $E\subseteq[n]$ lies in a proper facial set and whether $p_E$ lies in the boundary or the interior of the completable region.

We first consider the case that a partial observation with non-zero coordinates lies in the boundary of the completable region $p_E\in\partial\pi_E(\mathcal{M}_A)$. 

\begin{theorem}\label{thm:boundary-completions}
Let $E\subseteq[n]$ be such that $|E| = \rank A_E = \rank A - 1$. If $p_E\in\partial\pi_E(\mathcal{M}_A)$ is a partial observation with non-zero coordinates lying in the boundary of the completable region, then $p_E$ has a unique completion to the log-linear model $\mathcal{M}_A$.
\begin{enumerate}
    \item[1.] If $E$ is contained in a proper facial set, then this completion lies in the boundary of the model $\partial\mathcal{M}_A$. 

    \item[2.] If $E$ is not contained in a proper facial set, then this completion lies in the singular locus $B_{A,E}$.
\end{enumerate}
\end{theorem}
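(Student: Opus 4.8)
The plan is to settle existence first and then prove uniqueness together with the location of the completion, treating the two cases separately. The common engine is Proposition \ref{prop:exponential-trick}: any two non-negative completions $p,q$ of $p_E$ satisfy $p_i = q_i e^{\alpha\nu_i}$ on their common support for some $\alpha\in\mathbb{R}$. Existence is immediate, since $\mathcal{M}_A = X_A^{\ge 0}\cap\Delta_{n-1}$ is compact, so $\pi_E(\mathcal{M}_A)$ is closed and $p_E\in\partial\pi_E(\mathcal{M}_A)\subseteq\pi_E(\mathcal{M}_A)$ admits at least one completion $q$. Because $p_E$ has non-zero coordinates, $\supp(p_E)=E$, and I would repeatedly invoke Corollary \ref{cor:polytopefaces}: letting $F_0$ be the smallest facial set containing $E$, every completion $q$ satisfies $F_0\subseteq\supp(q)$.

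In case 2, where $E$ lies in no proper facial set, $F_0=[n]$, so every completion has full support and lies in $\mathcal{M}_A^{>0}$. The completion $q$ cannot lie in $\mathcal{M}_A^{>0}\setminus B_{A,E}$, as Corollary \ref{cor:dom-image} would then place $p_E=\pi_E(q)$ in the interior of the completable region, contradicting $p_E\in\partial\pi_E(\mathcal{M}_A)$; hence $q\in B_{A,E}$, giving the location claim. For uniqueness I would feed the exponential relation into the sum-to-one constraint. Writing $g(\alpha)=\sum_i q_i e^{\alpha\nu_i}$, any completion $p$ satisfies $p_i=q_i e^{\alpha\nu_i}$ for all $i\in[n]$ and hence $g(\alpha)=\sum_i p_i=1$. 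The function $g$ is strictly convex since $\nu\ne 0$ and $q$ is positive, and $g'(0)=\nu^T q=0$ precisely because $q\in B_{A,E}$; thus $\alpha=0$ is the strict global minimum and the only solution of $g(\alpha)=1$, forcing $p=q$.

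In case 1, where $E$ lies in a proper facial set, I would first identify $F_0$ as a facet: it is proper, so $\rank A_{F_0}\le\rank A-1$, while $E\subseteq F_0$ gives $\rank A_{F_0}\ge\rank A_E=\rank A-1$, hence equality and $\dim F_0=\dim P_A-1$. The only facial sets containing a facet are $F_0$ and $[n]$, so each completion has support $F_0$ or $[n]$. Full support is impossible: by Proposition \ref{prop:branchlocuspolytope} the branch locus is empty here, since the $E$-columns span a proper face and therefore $\im A_E\cap\interior(P_A)=\emptyset$, and a full-support completion would again land $p_E$ in the interior via Corollary \ref{cor:dom-image}. Thus every completion has support exactly $F_0$ and lies in $\partial\mathcal{M}_A$. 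For uniqueness I would use that the one-dimensional space $\im A^T\cap\ker\pi_E$ is spanned by $\nu=A^T\omega$, where $\omega$ is an inner normal of the facet $F_0$; then $\nu_i=\omega^T a_i=0$ for every $i\in F_0$, so the relation $p_i=q_i e^{\alpha\nu_i}$ collapses to $p_i=q_i$ on $\supp(p)=\supp(q)=F_0$, giving $p=q$.

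The main obstacle I anticipate is the uniqueness argument in case 2: unlike case 1, the vector $\nu$ does not vanish on the (now full) support, so equality of completions is not formal and must be extracted from the global geometry of the constraint $\sum_i p_i=1$. The clean route is the strict convexity of $g$ together with $g'(0)=\nu^T q=0$, which is exactly the statement $q\in B_{A,E}$; I would take care to justify strict convexity from $\nu\ne 0$ and positivity of $q$, and to confirm that each candidate point with coordinates $q_i e^{\alpha\nu_i}$ genuinely lies in $X_A$ using the torus action $q\mapsto q\,\varphi^A(e^{\alpha\omega})$ recorded in Section \ref{sec:log-linear models}, where $\nu=A^T\omega$.
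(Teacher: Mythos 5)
Your proof is correct and follows essentially the same route as the paper's: both rest on Proposition \ref{prop:exponential-trick}, with uniqueness in case 1 coming from the facet inner normal $\nu = A^T\omega$ vanishing on the support, and in case 2 from convexity of the exponential --- your strict convexity of $g(\alpha)=\sum_i q_i e^{\alpha\nu_i}$ with $g'(0)=\nu^Tq=0$ is exactly the paper's term-wise non-negativity of $\sum_i p_i(e^{\alpha\nu_i}-\alpha\nu_i-1)$ repackaged. The only cosmetic differences are that you obtain existence from compactness of $\mathcal{M}_A$ rather than from the boundary decomposition of Theorem \ref{thm:boundary-interior-image}, and that in case 1 you first pin every completion's support to the facet $F_0$ (via the face lattice and Corollary \ref{cor:dom-image}) whereas the paper lets $\supp(p)\subseteq\supp(q)$ and kills the extra coordinates of $q$ with the sum-to-one constraint.
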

\begin{proof}
Let $p_E\in\partial\pi_E(\mathcal{M}_A)$ be a partial observation with non-zero coordinates in the boundary of the completable region. By Theorem \ref{thm:boundary-interior-image}, we may express the boundary of the completable region as
\begin{align*}
\partial\pi_E(\mathcal{M}_A) = \pi_E(\partial\mathcal{M}_A)\cup\pi_E(B_{A,E}).
\end{align*}
If $E$ is contained in a proper facial set, then $B_{A,E}$ is empty by Proposition~\ref{prop:branchlocuspolytope} so that $p_E$ has a completion $p\in\partial\mathcal{M}_A$. Similarly, if $E$ is not contained in a proper facial set, then any completion of $p_E$ has all non-zero coordinates so that $p_E$ necessarily has a completion $p\in B_{A,E}$. We separate the proof into these cases:

\vspace{\baselineskip}

If $E$ is contained in a proper facial set, then $p_E$ has a completion $p\in\partial\mathcal{M}_A$. Similar to Lemma \ref{lem:boundary-image}, since $\rank A_E = \rank A - 1$ and $\supp(p)$ is a proper facial set, it follows that $\supp(p)$ is a facet and the smallest face containing $E$. We may let $\omega\in\ker A_E^T$ be an inner normal vector of the facial set $\supp(p)$ so that $\nu = A^T\omega$ satisfies $\nu_i =  0$ for $i\in\supp(p)$ and $\nu_i>0$ for $i\in[n]\setminus\supp(p)$. 

If $q\in\mathcal{M}_A$ is any other completion of $p_E$, then by Proposition \ref{prop:exponential-trick} there exists $\alpha\in\mathbb{R}$ such that
\begin{align*}
q_i = p_i e^{\alpha\nu_i}
\end{align*}
for $i\in\supp(p)\cap\supp(q)$. Since $E\subseteq\supp(q)$ and $\supp(p)$ is the smallest facial set containing $E$, we have $\supp(p)\subseteq\supp(q)$. Further, since $\nu_i = 0$ for $i\in\supp(p)$, we have that $q_i = p_i$ for $i\in\supp(p)$. Since $p,q\in\mathcal{M}_A$, we see
\begin{align*}
0=\sum_{i \in [n]} (q_i - p_i) = \sum_{i\in[n]\setminus\supp(p)} q_i.
\end{align*}
Since $q_i\ge 0$ for each $i$, this implies that $q_i = 0$ for $i\in[n]\setminus\supp(p)$ so that $q = p$. 

\vspace{\baselineskip}
If $E$ is not contained in a proper facial set, then $p_E$ has a completion $p\in B_{A,E}$. If $q\in\mathcal{M}_A$ is any other completion, then $q$ also has non-zero coordinates. Let $\nu\in\im A^T\cap \ker\pi_E$ be a non-zero vector. By Proposition \ref{prop:exponential-trick}, there exists $\alpha\in\mathbb{R}$ such that
\begin{align*}
q_i = p_i e^{\alpha\nu_i}
\end{align*}
for all $i\in[n]$. Recall that since $p \in B_{A,E}$ we have $\nu^Tp = 0$ by Corollary \ref{cor:branch-locus}. Observe that
\begin{align*}
\sum_{i \in [n]} p_i(e^{\alpha\nu_i} - \alpha \nu_i - 1) = \sum_{i \in [n]} (q_i - p_i) - \alpha\nu^Tp = 0.
\end{align*}
As each term of this summand is non-negative, this implies each summand is equal to zero. Since $p$ has non-zero coordinates and $\nu$ is a non-zero vector, there is some index $i$ such that $p_i\ne 0$ and $\nu_i\ne 0$, but $p_i(e^{\alpha\nu_i} - \alpha\nu_i - 1) = 0$. Therefore it must be that $\alpha = 0$ and $q = p$.

\vspace{\baselineskip}

\end{proof}

We turn our attention now to partial observations lying in the interior of the completable region $p_E\in\interior(\pi_E(\mathcal{M}_A))$. Completions in this case may not be unique, however it can be seen that there are at most two.

\begin{proposition}\label{prop:inequalities}
Let $E\subseteq[n]$ be such that $|E| = \rank A_E = \rank A - 1$ and let $\nu\in\im A^T\cap \ker\pi_E$ be a non-zero vector. If $p_E\in\interior(\pi_E(\mathcal{M}_A))$ is a partial observation in the interior of the completable region, then any two distinct completions $p,q\in\mathcal{M}_A$ are such that $\nu^Tp$ and $\nu^Tq$ are non-zero and have opposite signs. 
\end{proposition}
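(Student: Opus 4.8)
The plan is to first argue that every completion of an interior partial observation lies in the open stratum $\mathcal{M}_A^{>0}\setminus B_{A,E}$, which immediately yields the non-vanishing of $\nu^Tp$ and $\nu^Tq$, and then to extract the opposite signs from the exponential relation of Proposition \ref{prop:exponential-trick} together with the elementary inequality $e^x\ge 1+x$.

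First I would show that any completion $p\in\mathcal{M}_A$ of $p_E$ belongs to $\mathcal{M}_A^{>0}\setminus B_{A,E}$. Indeed, if $p$ lay in the boundary $\partial\mathcal{M}_A$, then Lemma \ref{lem:boundary-image} would force $p_E = \pi_E(p)\in\partial\pi_E(\mathcal{M}_A)$, and if $p$ lay in the branch locus $B_{A,E}$, then Lemma \ref{lem:branch-image} would likewise force $p_E\in\partial\pi_E(\mathcal{M}_A)$; both contradict the hypothesis $p_E\in\interior(\pi_E(\mathcal{M}_A))$. Hence $p$, and by the same argument $q$, has all non-zero coordinates and satisfies $\nu^Tp\ne 0$ directly from the definition of $B_{A,E}$. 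In particular $p_E=\pi_E(p)$ has non-zero coordinates because $p$ does, so Proposition \ref{prop:exponential-trick} is applicable.

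Next, since $p$ and $q$ both have full support, Proposition \ref{prop:exponential-trick} supplies a scalar $\alpha\in\mathbb{R}$ with $p_i = q_i e^{\alpha\nu_i}$ for every $i\in[n]$, and $\alpha\ne 0$ because $p\ne q$. Using that $p$ and $q$ are probability distributions and rearranging exactly as in the proof of Theorem \ref{thm:boundary-completions}, I would compute
\begin{align*}
\sum_i q_i\bigl(e^{\alpha\nu_i} - \alpha\nu_i - 1\bigr) = \sum_i(p_i - q_i) - \alpha\,\nu^Tq = -\alpha\,\nu^Tq.
\end{align*}
Each summand is non-negative by the inequality $e^x\ge 1+x$, and at least one summand is strictly positive because $\alpha\ne 0$, $\nu\ne 0$, and every $q_i>0$; hence $-\alpha\,\nu^Tq>0$. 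Running the identical computation with the roles of $p$ and $q$ exchanged, using $q_i = p_i e^{-\alpha\nu_i}$, yields $\alpha\,\nu^Tp>0$. Combining the two inequalities gives $\alpha\,\nu^Tq<0<\alpha\,\nu^Tp$, so the product $(\nu^Tp)(\nu^Tq)$ has the sign of $-\alpha^2<0$, whence $\nu^Tp$ and $\nu^Tq$ are non-zero and of opposite sign.

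The two exponential inequalities are routine, mirroring arguments already used in the paper. The step that demands genuine care is the first paragraph: I must rule out completions lying on $\partial\mathcal{M}_A\cup B_{A,E}$ in order to guarantee that both $p$ and $q$ have full support, since only then does the relation $p_i=q_ie^{\alpha\nu_i}$ hold in \emph{every} coordinate, which is what makes the telescoping sums above close up. This is precisely where the interiority of $p_E$, funnelled through Lemmas \ref{lem:boundary-image} and \ref{lem:branch-image}, does the essential work.
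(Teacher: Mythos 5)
Your proof is correct and follows essentially the same route as the paper's: both confine all completions of an interior observation to $\mathcal{M}_A^{>0}\setminus B_{A,E}$ (you cite Lemmas \ref{lem:boundary-image} and \ref{lem:branch-image} directly where the paper cites Theorem \ref{thm:boundary-interior-image}, which is their combination), then apply Proposition \ref{prop:exponential-trick} and the convexity bound $e^x\ge 1+x$ to force opposite signs. The only difference is cosmetic: the paper fixes $\alpha>0$ by symmetry and feeds in $\nu^Tp\ne 0$ from the branch-locus definition, while you keep $\alpha$ arbitrary and get strictness from $e^x>1+x$ for $x\ne 0$.
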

\begin{proof}
By Theorem \ref{thm:boundary-interior-image}, $p_E$ has a completion $p\in\mathcal{M}_A^{>0}\setminus B_{A,E}$ and any other completion also lies in $\mathcal{M}_A^{>0}\setminus B_{A,E}$. By Proposition \ref{prop:exponential-trick}, if $p,q\in\mathcal{M}_A^{>0}\setminus B_{A,E}$ are any two completions, there exists $\alpha\in\mathbb{R}$ such that
\begin{align*}
q_i = p_i e^{\alpha \nu_i}
\end{align*}
for all $i\in [n]$. Without loss of generality by interchanging $p$ and $q$, we may assume that $\alpha>0$. Observe
\begin{align*}
0 = \sum_i (q_i - p_i) = \sum_i p_i(e^{\alpha \nu_i} - 1) \ge \alpha \sum_i \nu_i p_i = \alpha \nu^Tp.
\end{align*}
Since $\alpha>0$ and $\nu^Tp\ne 0$, this implies that $\nu^Tp<0$. By the same process reversing the roles of $p$ and $q$ and by replacing $\alpha$ by $-\alpha$, one finds that $\nu^Tq >0$. 
\end{proof}

\begin{corollary}\label{cor:two-completions}
Let $E\subseteq[n]$ be such that $|E| = \rank A_E = \rank A - 1$, and let $p_E\in\interior(\pi_E(\mathcal{M}_A))$ be a partial observation in the interior of the completable region. Then $p_E$ has at most two completions to $\mathcal{M}_A$. 
\end{corollary}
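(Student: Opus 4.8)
The plan is to deduce this directly from the sign dichotomy established in Proposition \ref{prop:inequalities}, arguing by contradiction. The key observation is that Proposition \ref{prop:inequalities} constrains \emph{every} pair of distinct completions of an interior partial observation, not just a single pair, so a counting argument on signs should immediately bound the number of completions. First I would suppose, for contradiction, that $p_E$ admits three pairwise distinct completions $p,q,r\in\mathcal{M}_A$. Since $p_E\in\interior(\pi_E(\mathcal{M}_A))$, each of these is a completion of an interior partial observation, so Proposition \ref{prop:inequalities} applies to each of the three pairs $(p,q)$, $(p,r)$, and $(q,r)$.

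Next I would extract the numerical consequence. Fix a non-zero vector $\nu\in\im A^T\cap\ker\pi_E$ as in the hypothesis. Applying Proposition \ref{prop:inequalities} to the three pairs tells me that the three real numbers $\nu^Tp$, $\nu^Tq$, $\nu^Tr$ are all non-zero, and moreover that $\nu^Tp$ and $\nu^Tq$ have opposite signs, $\nu^Tp$ and $\nu^Tr$ have opposite signs, and $\nu^Tq$ and $\nu^Tr$ have opposite signs. The contradiction is then a pigeonhole argument: if $\nu^Tp$ and $\nu^Tq$ are opposite in sign and $\nu^Tp$ and $\nu^Tr$ are opposite in sign, then $\nu^Tq$ and $\nu^Tr$ must share the same sign, contradicting the requirement that they be opposite. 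Hence no three pairwise distinct completions can exist, and $p_E$ has at most two completions to $\mathcal{M}_A$.

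I do not expect any genuine obstacle here, since all the analytic content, namely producing the scalar $\alpha$ relating two completions via $q_i=p_ie^{\alpha\nu_i}$ and pinning down the sign of $\nu^Tp$, has already been carried out in Proposition \ref{prop:exponential-trick} and Proposition \ref{prop:inequalities}. The only point requiring a moment of care is the implicit claim that every completion of an interior partial observation is itself an interior, fully supported completion lying in $\mathcal{M}_A^{>0}\setminus B_{A,E}$, so that the proposition is legitimately applicable to all three; this is exactly the content used at the start of the proof of Proposition \ref{prop:inequalities} via Theorem \ref{thm:boundary-interior-image}, and I would simply invoke it. Thus the corollary reduces to the elementary fact that three non-zero reals cannot be pairwise of opposite sign.
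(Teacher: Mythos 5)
Your proof is correct and matches the paper's intended argument: the corollary is stated as an immediate consequence of Proposition \ref{prop:inequalities}, and the sign-dichotomy pigeonhole (three non-zero reals cannot be pairwise of opposite sign) is exactly the reasoning the paper relies on. Your added care about all completions of an interior observation lying in $\mathcal{M}_A^{>0}\setminus B_{A,E}$ is already built into the statement and proof of Proposition \ref{prop:inequalities}, so nothing further is needed.
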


The number of completions to $\mathcal{M}_A$ for a partial observation in the interior of the completable region may vary. Indeed, the number of completions is depends on whether the set $E\subseteq[n]$ is contained in a proper facial set.

\begin{theorem}\label{thm:interior-completions}
Let $E\subseteq[n]$ be such that $|E| = \rank A_E = \rank A - 1$ and let $p_E\in\interior(\pi_E(\mathcal{M}_A))$ be a partial observation lying in the interior of the completable region.
\begin{enumerate}
    \item[1.] If $E$ is contained in a proper facial set, then $p_E$ has a unique completion to $\mathcal{M}_A$.

    \item[2.] If $E$ is not contained in a proper facial set, then $p_E$ has two completions to $\mathcal{M}_A$.
\end{enumerate}
\end{theorem}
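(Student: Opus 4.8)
<br>

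The goal is to prove Theorem~\ref{thm:interior-completions}: for a partial observation $p_E$ in the interior of the completable region, there is exactly one completion when $E$ lies in a proper facial set, and exactly two completions otherwise. The plan is to leverage the work already done. Corollary~\ref{cor:two-completions} gives the upper bound of two completions, so the entire task reduces to establishing the \emph{lower} bounds: at least one completion in case (1), and at least two in case (2). The existence of one completion is automatic since $p_E$ lies in the completable region by hypothesis.

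For case (1), where $E$ is contained in a proper facial set, I would argue uniqueness directly. By Proposition~\ref{prop:branchlocuspolytope}, the branch locus $B_{A,E}$ is empty when $E$ lies in a proper facial set, so every completion lies in $\mathcal{M}_A^{>0}\setminus B_{A,E}$ (as $p_E$ is interior). Now suppose $p,q$ were two distinct completions. By Proposition~\ref{prop:inequalities}, the quantities $\nu^Tp$ and $\nu^Tq$ would have opposite signs. But I would show this is impossible: since $E$ is contained in a proper facial set, the vector $\nu\in\im A^T\cap\ker\pi_E$ can be chosen with all coordinates of one sign. Concretely, taking $\omega\in\ker A_E^T$ to be an inner normal vector of the smallest facial set $F$ containing $E$ (which is a facet by the rank condition), the vector $\nu = A^T\omega$ satisfies $\nu_i = 0$ for $i\in F$ and $\nu_i>0$ for $i\notin F$. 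For any completion $r\in\mathcal{M}_A^{>0}$, all coordinates are positive, so $\nu^Tr = \sum_{i\notin F}\nu_i r_i > 0$ strictly. Thus $\nu^Tp$ and $\nu^Tq$ cannot have opposite signs, contradicting the existence of a second completion. Hence the completion is unique.

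For case (2), where $E$ is not contained in any proper facial set, I must produce a \emph{second} completion. Let $p\in\mathcal{M}_A^{>0}\setminus B_{A,E}$ be the given completion; since $E$ is not in a proper facial set, $p$ has all non-zero coordinates by Corollary~\ref{cor:polytopefaces}. The idea is to use the deformation $q_i = p_i e^{\alpha\nu_i}$ from Proposition~\ref{prop:exponential-trick} and solve for a nonzero $\alpha$ that keeps the total mass equal to one. Define the function
\begin{align*}
g(\alpha) = \sum_{i\in[n]} p_i e^{\alpha\nu_i},
\end{align*}
which records the coordinate sum of the deformed point $\varphi^A$-image. Since $p$ lies in the toric variety and the deformation moves along the torus orbit direction $\nu\in\im A^T$, each $q(\alpha) = (p_i e^{\alpha\nu_i})_i$ still lies in $X_A^{>0}$ and projects to $p_E$ (because $\nu\in\ker\pi_E$, so $\nu_i=0$ for $i\in E$, leaving the $E$-coordinates fixed). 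Rescaling $q(\alpha)$ by $1/g(\alpha)$ lands back in $\mathcal{M}_A$, but this rescaling would perturb the $E$-coordinates unless $g(\alpha)=1$. So I seek a second root of $g(\alpha)=1$ besides $\alpha=0$.

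The key analytic observation is that $g$ is strictly convex: $g''(\alpha) = \sum_i p_i\nu_i^2 e^{\alpha\nu_i} > 0$ since $p$ has positive coordinates and $\nu\neq 0$. We have $g(0)=\sum_i p_i = 1$, and $g'(0) = \sum_i p_i\nu_i = \nu^Tp$, which is nonzero because $p\notin B_{A,E}$. By strict convexity, a strictly convex function with nonzero derivative at a point where it equals $1$ must cross the level $1$ exactly once more, provided $g\to\infty$ on both ends. I would verify the coercivity $g(\alpha)\to\infty$ as $\alpha\to\pm\infty$: since $E$ is not contained in a proper facial set, the coordinates $\nu_i$ take both positive and negative values—if $\nu$ had all coordinates of one sign, $\nu$ would be (a scalar multiple of) an inner normal realizing a proper facial set containing $E$, contradicting the hypothesis. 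Thus there exist indices with $\nu_i>0$ and with $\nu_i<0$, forcing $g(\alpha)\to\infty$ at both infinities. A strictly convex coercive function attains a unique minimum and hits each value above its minimum exactly twice; since $g(0)=1$ with $g'(0)\neq 0$, the value $1$ is strictly above the minimum, so there is exactly one other root $\alpha_0\neq 0$. The point $q = (p_i e^{\alpha_0\nu_i})_i$ is then a second completion in $\mathcal{M}_A^{>0}\setminus B_{A,E}$, distinct from $p$ since $\alpha_0\neq 0$ and $\nu\neq 0$.

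The main obstacle is case (2): assembling the convexity-plus-coercivity argument cleanly and, in particular, justifying that the non-facial hypothesis forces $\nu$ to have mixed signs (equivalently, that $g$ is coercive). This sign dichotomy is the crux—it is precisely what distinguishes the one-completion case from the two-completion case, mirroring the positivity argument used in case (1).
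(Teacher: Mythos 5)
Your proposal is correct and follows essentially the same route as the paper: case (1) is the identical sign argument via Proposition~\ref{prop:inequalities} with $\nu = A^T\omega$ for an inner normal $\omega$ of the facet containing $E$, and case (2) constructs the second completion from the same torus deformation $q_i = p_ie^{\alpha\nu_i}$ and analyzes the same scalar function (your $g(\alpha)$ is exactly the paper's Laurent polynomial $f(x)+1$ under $x = e^{\alpha}$), including the same justification that the non-facial hypothesis forces $\nu$ to have mixed signs. The only cosmetic difference is that you finish with strict convexity plus coercivity of $g$ (which incidentally re-proves the at-most-two bound of Corollary~\ref{cor:two-completions}), whereas the paper uses an intermediate-value argument at the ends of $(0,1)$ or $(1,\infty)$ together with that corollary.
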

\begin{proof}
\noindent\textit{1.} If $E\subseteq[n]$ is contained in a proper facial set $F$, we may let $\omega\in \ker A_E^T$ be an inner normal vector for $F$. Then $\nu = A^T\omega$ is such that $\nu_i = 0$ for $i\in F$ and $\nu_i>0$ for $i\in[n]\setminus F$. Thus, $\nu^Tp\ge 0$ for all $p\in\mathcal{M}_A$. From Proposition \ref{prop:inequalities}, it follows that $p_E$ must have a unique completion to $\mathcal{M}_A$. 

\vspace{\baselineskip}

\noindent\textit{2.} Assume $E\subseteq[n]$ is not contained in a proper facial set. If $p_E\in\interior(\pi_E(\mathcal{M}_A))$ is a partial observation with non-zero coordinates in the interior of the completable region, then by Theorem \ref{thm:boundary-interior-image}, $p_E$ has a completion $p\in\mathcal{M}_A^{>0}\setminus B_{A,E}$. By Corollary \ref{cor:polytopefaces} and Proposition \ref{prop:exponential-trick}, any completion $q\in X_A^{\ge 0}$ must have all non-zero coordinates and there must exist $\alpha\in\mathbb{R}$ satisfying
\begin{align*}
q_i = p_i e^{\alpha\nu_i}
\end{align*}
for all $i\in[n]$. Setting $x = e^\alpha$, the completion $q\in X_A^{\ge 0}$ lies in the log-linear model $\mathcal{M}_A$ exactly when $x$ is a root of the (Laurent) polynomial
\begin{align*}
f(x) = \sum_i p_i x^{\nu_i} - 1.
\end{align*}
Conversely, any positive root of this polynomial yields a completion of $p_E$ lying in the log-linear model $\mathcal{M}_A$. From Corollary \ref{cor:two-completions}, the polynomial $f$ has at most two positive roots. We use continuity arguments to show that $f$ always has two positive roots. 

Note that since $E$ is not contained in a proper facial set, the vector $\nu$ must have positive and negative coordinates corresponding to terms of $f$ with positive and negative exponents. As these terms of $f$ have positive coefficients, $f(x)$ can then be made arbitrarily large for $x>1$ sufficiently large and for $0<x<1$ sufficiently small. Note that $x=1$ is a root of $f$ and $f'(1) = \nu^Tp$. Thus, if $\nu^Tp>0$, then for $0<x<1$ sufficiently close to $x=1$, $f(x)$ is negative. Thus $f(x)$ has a root in the open interval $(0,1)$. Similarly, if $\nu^Tp<0$, then for $x>1$ sufficiently close to $x=1$, $f(x)$ is negative and $f(x)$ has a root in the open interval $(1,\infty)$. Thus, $f$ always has two positive roots yielding two completions of $p_E$. 
\end{proof}

Example \ref{ex:square-part2} demonstrates how the proof of Theorem \ref{thm:interior-completions} may be used to recover all completions of a partial observation $p_E\in\interior(\pi_E(\mathcal{M}_A))$ from a single completion.

\begin{example}\label{ex:square-part2}
As in Example \ref{ex:square-part1}, let
\begin{align*}
A = \begin{pmatrix}
2 & 1 & 1 & 0 \\
0 & 1 & 0 & 1 \\
0 & 0 & 1 & 1
\end{pmatrix}\in\mathbb{Z}_{\ge 0}^{3\times 4}
\end{align*}
and $E = \{1,4\}$. Recall that the log-linear model is given by
\begin{align*}
\mathcal{M}_A = \{(x,y,z,w)\in\mathbb{R}_{\ge 0}^4: xw - yz = x+y+z+w-1 = 0\}, 
\end{align*}
and the singular locus is the hyperplane section of $\mathcal{M}_A^{>0}$ defined by
\begin{align*}
B_{A,E} = \{(x,y,z,w)\in\mathcal{M}_A^{>0}: y - z = 0\}.
\end{align*}

Consider the partial observation $(\frac{1}{6},\frac{1}{3})\in\mathbb{R}_{\ge 0}^E$ and a completion $p = (\frac{1}{6},\frac{1}{3},\frac{1}{6},\frac{1}{3})\in\mathcal{M}_A^{>0}\setminus B_{A,E}$. Since $\nu^Tp = \frac{1}{6}>0$, there is another completion $q\in\mathbb{M}_A^{>0}$ such that $\nu^Tq <0$. This completion can be obtained by computing positive roots of the Laurent polynomial 
\begin{align*}
f(x) = \sum_i p_i x^{\nu_i} - 1 = \frac{1}{3}x-\frac{1}{2}+\frac{1}{6}x^{-1}.
\end{align*}
One finds that $x=\frac{1}{2}$ is a root of $f(x)$ so that 
\begin{align*}
q = p\left(\frac{1}{2}\right)^{(0,1,-1,0)} = \left(\frac{1}{6},\frac{1}{6},\frac{1}{3},\frac{1}{3}\right)\in\mathcal{M}_A^{>0}\setminus B_{A,E}
\end{align*}
is the other completion. Theorem \ref{thm:interior-completions} implies that these are the only completions of the partial observation $p_E$ to the log-linear model $\mathcal{M}_A$. 
\end{example}

\section{Describing the Completable Region}\label{sec:computing-completable-region}
Given a subset $E\subseteq[n]$ and an integer matrix $A\in\mathbb{Z}_{\ge 0}^{k\times n}$, the completable region $\pi_E(\mathcal{M}_A)$ is a full-dimensional semialgebraic set---it is defined by polynomial inequalities. A \textit{semialgebraic description} of the completable region $\pi_E(\mathcal{M}_A)$ is such a description of the completable region by polynomial inequalities. Obtaining a semialgebraic description of the completable region is difficult in general. However, in this section we will provide examples where obtaining the complete semialgebraic description of the completable region is possible by elementary methods. 
From Theorem \ref{thm:boundary-interior-image}, the algebraic boundary of the completable region $\pi_E(\mathcal{M}_A)$ is the union of the Zariski closure of the image of the boundary $\pi_E(\partial\mathcal{M}_A)$ and the Zariski closure of the image of the singular locus $\pi_E(B_{A,E})$. In both cases, the defining ideal can be computed explicitly via elimination as long as the defining ideal of the boundary $\partial\mathcal{M}_A$ and the defining ideal of the singular locus $B_{A,E}$ are known. 
But first we consider the simpler problem of computing defining equations for the boundary of the completable region as in Example \ref{ex:square-part1}. 

\begin{definition}
The \textit{algebraic boundary} of the completable region $\pi_E(\mathcal{M}_A)$ is the Zariski closure of the Euclidean boundary $\partial\pi_E(\mathcal{M}_A)$.
\end{definition}

The defining ideal of the boundary $\partial\mathcal{M}_A$ is generated by the product of the coordinates $\prod_i x_i$, the hyperplane $\sum_i x_i - 1$, and $I_A$. However, the defining ideal of $B_{A,E}$ my be difficult to obtain. For instance, given $\nu\in\im A^T\cap \ker\pi_E$, the hyperplane section defined by $\nu^Tp = 0$ may intersect the Zariski closure of the log-linear model $\mathcal{M}_A$ in more than one component. However, from Proposition \ref{prop:branchlocuspolytope}, at most one of these components intersects the model $\mathcal{M}_A$. Techniques from real algebraic geometry such as the Positivestellensatz and its variants produce methods of choosing this component in general. See \cite{roy2020bound,fatemeh2022synthesis} for more information on these techniques and their use in applications. 

We provide pseudo-code for computing the defining ideal of the algebraic boundary. We note the reliance on several methods which we consider as black-boxes: one for computing the toric ideal $I_A$, one for producing the minimal primes of an ideal, one for determining whether an ideal has a positive zero, and one for computing elimination ideals.

\begin{algorithm}[Defining Ideal of the Algebraic Boundary]\

\def\arraystretch{1.5}
\noindent\begin{tabular}{ll}
\hline
\textbf{Input} & $A$: An integer matrix in $\mathbb{Z}_{\ge 0}^{k\times n}$\\
~ & $E$: A subset such that $|E| = \rank A_E = \rank A - 1$\\ 
\hline
\textbf{Output} & $I$: The defining ideal of the algebraic boundary.\\
\hline
\end{tabular}

\begin{enumerate}
\item[1.] Find a non-zero vector $\nu\in A^T(\ker A_E^T)$.
\item[2.] Compute the minimal primes of the ideal $J = I_A + \langle \sum_i x_i - 1, \nu^Tx\rangle$.
\item[3.] Compute $I(B_{A,E})$ as the minimal prime of $J$ whose corresponding variety intersects the log-linear model $\mathcal{M}_A$.
\item[4.] Set $I_1$ as the ideal obtained by eliminating the variables $x_i$ for $i\in[n]\setminus E$ from the defining ideal of the singular locus $I(B_{A,E})$.
\item[5.] Compute $I(\partial\mathcal{M}_A)$ as the ideal $I_A + \langle \sum_i x_i - 1,\prod_i x_i\rangle$.
\item[6.] Set $I_2$ as the ideal obtained by eliminating the variables $x_i$ for $i\in[n]\setminus E$ from the defining ideal of the boundary of the model $I(\partial\mathcal{M}_A)$.
\item[7.] Return $I$ as the radical of the ideal $I_1 I_2$.
\end{enumerate}
\label{alg:algebraic-boundary}
\end{algorithm}

We note that in general, inequalities obtained from the defining equations of the algebraic boundary do not give a semialgebraic description of the completable region. This is demonstrated in Example \ref{ex:blob}.

\begin{example} 
\label{ex:blob}
Let

\begin{equation*}
    A = \begin{pmatrix}
    3 & 4 & 0 & 3 & 2 \\ 
    0 & 1 & 2 & 1 & 1 \\
    2 & 0 & 3 & 1 & 2    
    \end{pmatrix}\in\mathbb{Z}_{\ge 0}^{3\times 5}
\end{equation*}
and $E = \{4,5\}$. We use coordinates $x$, $y$, $z$, $u$, $v$ for $\mathbb{R}_{\ge 0}^5$. The polytope $P_A$ is a triangle containing two interior points as illustrated in Figure \ref{fig:blob} and the defining ideal $I_A$ is generated by three binomials,
\begin{align*}
    I_A = \langle u^2 - yv, v^3 - xzu, uv^2 - xyz \rangle.  
\end{align*}

We first consider the portion of the algebraic boundary which is the image of the singular locus. The kernel $\ker A_E^T$ is generated by the single vector $\omega = (1,-4,1)$ so that we may write $\nu = (5,0,-5,0,0)$ and 
\begin{align*}
B_{A,E} = \{(x,y,z,u,v)\in\mathcal{M}_A^{>0}:x-z = 0\}.
\end{align*}

\begin{figure}[H]
    \centering
    \begin{minipage}{.45\textwidth}
    \centering
    \begin{tikzpicture}
    \draw (-0.25, 2) node[left] {$1$};
    \draw (0, 2) node {$\bullet$};
    \draw (1, -0.25) node[below] {$2$};
    \draw (1, 0) node {$\bullet$};
    \draw (2.25, 3.25) node[above] {$3$};
    \draw (2, 3) node {$\bullet$};
    \draw (1,1) node {$\bullet$};
    \draw (1, 0.9) node[below] {$4$};
    \draw (1,2) node {$\bullet$};
    \draw (1, 1.9) node[below] {$5$};

    \draw (0, 2) -- (1,0);
    \draw (0, 2) -- (2, 3);
    \draw (1,0) -- (2,3);
    \end{tikzpicture}
    \end{minipage}
    \begin{minipage}{.45\textwidth}
    \centering
    \includegraphics[scale = 0.4]{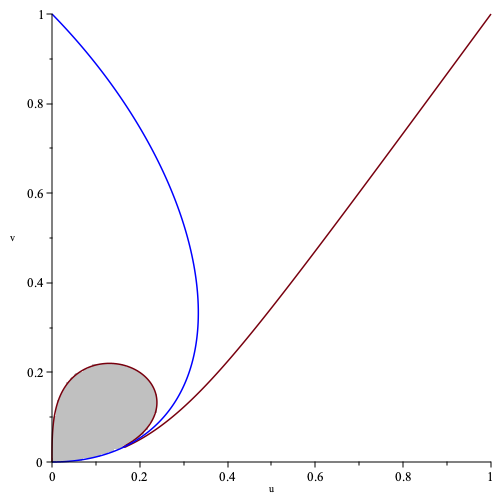}
    \end{minipage}
    \caption{(Left) The polytope $P_A$ for Example \ref{ex:blob}\\(Right) The completable region and curves defined by {\color{black!60!red}$f(u,v) = 0$} and {\color{black!30!blue}$g(u,v) = 0$}}
    \label{fig:blob}
\end{figure}

As the ideal 
\begin{align*}
I(B_{A,E}) = \langle u^2 - yv, v^3 - xzu, uv^2 - xyz,x+y+z+u+v-1,x-z\rangle,
\end{align*}
is prime, it is the defining ideal of the singular locus. Using \texttt{Macaulay2} to eliminate the variables $x$, $y$, and $z$ from the ideal $I(B_{A,E})$, the eliminant is generated by the single polynomial
\begin{align*}
f(u,v) = u^5+2u^4v+3u^3v^2+2u^2v^3+uv^4-4v^5-2u^3v-2u^2v^2-2uv^3+uv^2.
\end{align*}
Thus, the algebraic boundary of the completable region $\pi_E(\mathcal{M}_A)$ is defined by the single polynomial equation $f(u,v) = 0$. 

Note if $p\in\partial\mathcal{M}_A$, then some coordinate is equal to zero so that $\supp(p)$ is a proper facial set. Since the only facial set intersecting $E$ is the set $[5] = \{1,2,3,4,5\}$, it follows that $\supp(p)\cap E = \emptyset$ and $\pi_E(p) = (0,0)$. That is, $\pi_E(\partial\mathcal{M}_A) = \{(0,0)\}$. Thus, the algebraic boundary is defined by the single equation $f(u,v) = 0$. 

Contrary to this, the completable region is not described by either inequality $f(u,v)\ge 0$ or $f(u,v)\le 0$. Indeed, the completable region is illustrated in Figure \ref{fig:blob}. The red curve is the algebraic boundary defined by the equation $f(u,v) = 0$, but only the shaded area is the completable region. 

To obtain a semialgebraic description of the completable region, consider a partial observation $(u,v)\in\pi_E(\mathcal{M}_A)$ with non-zero coordinates and a completion $(x,y,z,u,v)\in\mathcal{M}_A$. Multiplying through the equation $x+y+z+u+v = 1$ by $zuv$ and using the generators of the toric ideal $I_A$, one finds that $z$ satisfies the quadratic equation
\begin{align*}
uv z^2 + (u^3 + u^2v + uv^2 - uv)z + v^4 = 0.
\end{align*}
Conversely, using the generators of $I_A$, any solution to this quadratic yields a completion of $(u,v)$ to the log-linear model $\mathcal{M}_A$. Thus, this quadratic has all non-negative solutions and the sum of these solutions 
\begin{align*}
\frac{-1}{uv}\left(u^3+u^2v+uv^2-uv\right) \ge 0
\end{align*}
must also be non-negative. Since $u$ and $v$ are non-zero, this implies that
\begin{align*}
g(u,v) = -u^2-uv-v^2+v \ge 0.
\end{align*}
The blue curve in Figure \ref{fig:blob} is defined by the equation $g(u,v) = 0$ and together the inequalities $f(u,v) \ge 0$ and $g(u,v)\ge 0$ provide a semialgebraic description of the completable region. Precisely,
\begin{align*}
\pi_E(\mathcal{M}_A) = \{(u,v)\in\mathbb{R}_{\ge 0}^2: f(u,v)\ge 0,~g(u,v)\ge 0\}. 
\end{align*}

\end{example}

As in Example \ref{ex:blob}, it is often the case that the ideal $I_A + \langle \sum_i x_i - 1, \nu^Tx\rangle$ is prime, in which case it is equal to the defining ideal of the singular locus $I(B_{A,E})$. The following example demonstrates a situation in which the boundary of the log-linear model $\mathcal{M}_A$ maps onto the boundary of the completable region.

\begin{example}
A hierarchical model is a log-linear model determined by the data of a simplicial complex with positive integer weights given at each of the vertices. Each vertex corresponds to a random variable with the weight of the vertex being the number of outcome states of the variable. The faces of the simplicial structure encode dependencies among these variables. We consider the hierarchical model associated to the length two segment with each vertex weight equal to two. This corresponds to three binary random variables $X_1$, $X_2$, and $X_3$ with the probability of their outcomes related by certain relations determined by the dependencies.

\begin{figure}[H]
    \centering
    \begin{tikzpicture}
    \draw (0, -0.35) node {$X_1$};
    \draw (0, 0) node {$\bullet$};
    \draw (2, -0.35) node {$X_2$};
    \draw (2, 0) node {$\bullet$};
    \draw (4, -0.35) node {$X_3$};
    \draw (4, 0) node {$\bullet$};

    \draw (0, 0) -- (2,0);
    \draw (2, 0) -- (4, 0);
    \end{tikzpicture}
    \caption{The length two line segment}
    \label{fig:LengthTwoSegment}
\end{figure}
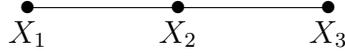

The model has coordinates $p_{ijk}$ given by the probability that $X_1 = i$, $X_2 = j$, and $X_3 = k$. As described in Chapter 9 of \cite{sullivant2018algebraic}, a hierarchical model is a log-linear model and so can be described by an integer matrix $A\in\mathbb{Z}_{\ge 0}^{k\times n}$. For the hierarchical model associated to Figure \ref{fig:LengthTwoSegment} and all weights equal to two, the matrix $A$ is given by
\begin{align*}
A = &\begin{pmatrix}
1 & 1 & 0 & 0 & 0 & 0 & 0 & 0 \\
0 & 0 & 1 & 1 & 0 & 0 & 0 & 0 \\
0 & 0 & 0 & 0 & 1 & 1 & 0 & 0 \\
0 & 0 & 0 & 0 & 0 & 0 & 1 & 1 \\
1 & 0 & 0 & 0 & 1 & 0 & 0 & 0 \\
0 & 1 & 0 & 0 & 0 & 1 & 0 & 0 \\
0 & 0 & 1 & 0 & 0 & 0 & 1 & 0 \\
0 & 0 & 0 & 1 & 0 & 0 & 0 & 1
\end{pmatrix},
\end{align*}
where the columns of $A$ index the coordinates $p_{ijk}$ in lexicographical order. One finds that $\rank A = 6$ and the toric ideal $I_A$ is generated by the following binomials
\begin{align*}
I_A = \langle p_{112}p_{211} - p_{111}p_{212},p_{122}p_{211} - p_{121}p_{222}\rangle.
\end{align*}

We demonstrate two subsets $E\subseteq[n]$ for which a semialgebraic description of the completable region $\pi_E(\mathcal{M}_A)$ can be easily computed and show how a semialgebraic description can be obtained in general for this model. 

Let $E = \{1,2,3,4,5\}\subseteq[8]$ so that $|E| = \rank A_E = \rank A - 1 = 5$. In other words, we will observe the coordinates $p_{111}$, $p_{112}$, $p_{121}$, $p_{122}$, and $p_{211}$. \texttt{Macaulay2} can be used to verify that $E$ is contained in the proper facial set $\{1,2,3,4,5,6\}$. Thus, the singular locus $B_{A,E}$ is empty and the algebraic boundary can be obtained by eliminating $p_{212}$, $p_{221}$, and $p_{222}$ from the ideal $I_A + \langle \sum_{i,j,k} p_{ijk} - 1, \prod_{i,j,k} p_{ijk}\rangle$. We find that the algebraic boundary is given by the single polynomial
\begin{align*}
f(p_{111},p_{112},p_{121},p_{122},p_{211}) &= -p_{111}^2-p_{111}p_{121}-p_{111}p_{122}-p_{111}p_{211} - p_{112}p_{211} + p_{111}.
\end{align*}

Further, a semialgebraic description of the completable region is given by
\begin{align*}
\pi_E(\mathcal{M}_A) = \{p_E\in\mathbb{R}_{\ge 0}^5:f(p_{111},p_{112},p_{121},p_{122},p_{211})\ge 0\}.
\end{align*}
Indeed, given a partial observation $p_E\in\pi_E(\mathcal{M}_A)$ with non-zero coordinates, the remaining coordinates can be computed in rational functions of the observed coordinates $p_{111}$, $p_{112}$, $p_{121}$, $p_{122}$, and $p_{211}$ as
\begin{align*}
p_{212} &= \frac{p_{112}p_{211}}{p_{111}}\\
p_{221} &= \frac{p_{121}f(p_{111},p_{112},p_{121},p_{122},p_{211})}{p_{111}(p_{121}+p_{122})}\\
p_{222} &= \frac{p_{122}f(p_{111},p_{112},p_{121},p_{122},p_{211})}{p_{111}(p_{121}+p_{122})}.
\end{align*}
As our partial observation has non-zero coordinates, this completion lies in the log-linear model $\mathcal{M}_A$ exactly when $f(p_{111},p_{112},p_{121},p_{122},p_{211})\ge 0$. 

We note that for any subset $E\subseteq[n]$ which is contained in a proper facial set, the above analysis can be carried out to obtain a semialgebraic description of the completable region. Indeed, for the hierarchical model $\mathcal{M}_A$, if $E\subseteq[n]$ is contained in a proper facial set, then the coordinates $p_i$ for $i\in[n]\setminus E$ can be explicitly computed in rational functions in the coordinates $p_i$ for $i\in E$ via the generators of the toric ideal $I_A$ and the relation $\sum_i p_i = 1$. 

For a subset $E\subseteq[n]$ such that $|E| = \rank A_E = \rank A - 1$ that is not contained in a proper facial set, then by choosing a coordinate $p_{ijk}$ not indexed by $E$, the remaining coordinates are rational functions in $p_{ijk}$ and the coordinates indexed by $E$. Indeed, by observing the generators of the toric ideal $I_A$, each of these rational functions has the form $f(p_E)p_{ijk}$ or $f(p_E)p_{ijk}^{-1}$ where $f(p_E)$ is a (Laurent) monomial in the coordinates indexed by $E$. Subsituting these values into the relation $\sum_{ijk} p_{ijk} = 1$, one finds that $p_{ijk}$ satisfies a quadratic relation. As in Example \ref{ex:blob}, this quadratic has all non-negative roots for $p_E\in\pi_E(\mathcal{M}_A)$ and one obtains an additional polynomial inequality that the partial observation $p_E$ must satisfy. Since the remaining coordinates are given as rational functions with positive coefficients in $p_{ijk}$ and the coordinates indexed by $E$, any solution of this quadratic corresponds to a completion in $\mathcal{M}_A$ of $p_E$. Thus, one obtains a semialgebraic description of the completable region.

In either of the cases above, given a subset $E\subseteq[n]$ satisfying $|E| = \rank A_E = \rank A - 1$, one can obtain a semialgebraic description of the completable region $\pi_E(\mathcal{M}_A)$ for this hierarchical model. 
\end{example}

\bibliographystyle{alpha}
\bibliography{arxiv_submission}

\begin{thebibliography}{GHMM22}

\bibitem[BBS20]{bernstein2020typical}
Daniel~Irving Bernstein, Grigoriy Blekherman, and Rainer Sinn.
\newblock Typical and generic ranks in matrix completion.
\newblock {\em Linear Algebra and its Applications}, 585:71--104, 2020.

\bibitem[BGMV23]{breiding2023algebraic}
Paul Breiding, Fulvio Gesmundo, Mateusz Micha{\l}ek, and Nick Vannieuwenhoven.
\newblock Algebraic compressed sensing.
\newblock {\em Applied and Computational Harmonic Analysis}, 65:374--406, 2023.

\bibitem[CLS11]{CoxLittleSchenck}
David~A. Cox, John~B. Little, and Henry~K. Schenck.
\newblock {\em Toric varieties}, volume 124 of {\em Graduate Studies in
  Mathematics}.
\newblock American Mathematical Society, Providence, RI, 2011.

\bibitem[DS98]{sturmfels1998algorithms}
Persi Diaconis and Bernd Sturmfels.
\newblock Algebraic algorithms for sampling from conditional distributions.
\newblock {\em The Annals of Statistics}, 26(1):363--397, 1998.

\bibitem[Ewa96]{Ewald}
G\"{u}nter Ewald.
\newblock {\em Combinatorial convexity and algebraic geometry}, volume 168 of
  {\em Graduate Texts in Mathematics}.
\newblock Springer-Verlag, New York, 1996.

\bibitem[Ful93]{Fulton}
William Fulton.
\newblock {\em Introduction to toric varieties}, volume 131 of {\em Annals of
  Mathematics Studies}.
\newblock Princeton University Press, Princeton, NJ, 1993.
\newblock The William H. Roever Lectures in Geometry.

\bibitem[GHMM22]{fatemeh2022synthesis}
Amir~Kafshdar Goharshady, S~Hitarth, Fatemeh Mohammadi, and Harshit~J Motwani.
\newblock Template-based program synthesis using stellens\"atze.
\newblock {\em arXiv preprint arXiv:2209.03602}, 2022.

\bibitem[GMN22]{aida2022staged}
Christiane G{\"o}rgen, Aida Maraj, and Lisa Nicklasson.
\newblock Staged tree models with toric structure.
\newblock {\em Journal of Symbolic Computation}, 113:242--268, 2022.

\bibitem[GMS06]{GeigerMeekSturmfels}
Dan Geiger, Christopher Meek, and Bernd Sturmfels.
\newblock {On the toric algebra of graphical models}.
\newblock {\em The Annals of Statistics}, 34(3):1463 -- 1492, 2006.

\bibitem[GS]{M2}
Daniel~R. Grayson and Michael~E. Stillman.
\newblock Macaulay2, a software system for research in algebraic geometry.
\newblock Available at \url{http://www2.macaulay2.com}.

\bibitem[HS02]{serkan2002grobner}
Serkan Ho{\c s}ten and Seth Sullivant.
\newblock Gr{\"o}bner bases and polyhedral geometry of reducible and cyclic
  models.
\newblock {\em Journal of Combinatorial Theory, Series A}, 100(2):277--301,
  2002.

\bibitem[HS07]{serkan2007finiteness}
Serkan Ho\c{s}ten and Seth Sullivant.
\newblock A finiteness theorem for {M}arkov bases of hierarchical models.
\newblock {\em Journal of Combinatorial Theory. Series A}, 114(2):311--321,
  2007.

\bibitem[KKKR17]{kahle2017geometry}
Thomas Kahle, Kaie Kubjas, Mario Kummer, and Zvi Rosen.
\newblock The geometry of rank-one tensor completion.
\newblock {\em SIAM Journal on Applied Algebra and Geometry}, 1(1):200--221,
  2017.

\bibitem[KR17]{kubjas2014matrix}
Kaie Kubjas and Zvi Rosen.
\newblock Matrix completion for the independence model.
\newblock {\em Journal of Algebraic Statistics}, 8(1):1--21, 2017.

\bibitem[KTT15]{kiraly2015algebraic}
Franz~J Kir{\'a}ly, Louis Theran, and Ryota Tomioka.
\newblock The algebraic combinatorial approach for low-rank matrix completion.
\newblock {\em Journal of Machine Learning Research}, 16(1):1391--1436, 2015.

\bibitem[KTTU12]{kiraly2012algebraic}
Franz~J Kir{\'a}ly, Louis Theran, Ryota Tomioka, and Takeaki Uno.
\newblock The algebraic combinatorial approach for low-rank matrix completion.
\newblock {\em arXiv preprint arXiv:1211.4116}, 2012.

\bibitem[LPR20]{roy2020bound}
Henri Lombardi, Daniel Perrucci, and Marie-Fran\c{c}oise Roy.
\newblock An elementary recursive bound for effective {P}ositivstellensatz and
  {H}ilbert's 17th problem.
\newblock {\em Mem. Amer. Math. Soc.}, 263(1277):v+125, 2020.

\bibitem[SC10]{singer2010uniqueness}
Amit Singer and Mihai Cucuringu.
\newblock Uniqueness of low-rank matrix completion by rigidity theory.
\newblock {\em SIAM Journal on Matrix Analysis and Applications},
  31(4):1621--1641, 2010.

\bibitem[Sot03]{Sottile}
Frank Sottile.
\newblock Toric ideals, real toric varieties, and the moment map.
\newblock In {\em Topics in algebraic geometry and geometric modeling}, volume
  334 of {\em Contemporary Mathematics}, pages 225--240. American Mathematical
  Society, Providence, RI, 2003.

\bibitem[Sul18]{sullivant2018algebraic}
Seth Sullivant.
\newblock {\em Algebraic statistics}, volume 194.
\newblock American Mathematical Society, Providence, RI, 2018.

\bibitem[tt]{4ti2}
4ti2 team.
\newblock 4ti2---a software package for algebraic, geometric and combinatorial
  problems on linear spaces.
\newblock Available at \url{https://4ti2.github.io}.

\end{thebibliography}

\end{document}